\newtheoremstyle{case}{}{}{}{}{}{:}{ }{}
\theoremstyle{case}
\newcommand{\be}{\begin{equation}}
\newcommand{\ee}{\end{equation}}
\newcommand{\ben}{\begin{eqnarray*}}
\newcommand{\een}{\end{eqnarray*}}
\newtheorem{examp}{\sc Example}
\newtheorem{remk}{\sc Remark}
\newtheorem{corol}{\sc Corollary}
\newtheorem{lemma}{\sc Lemma}
\newtheorem{theorem}{\sc Theorem}
\newtheorem{defn}{\sc Definition}
\newcommand{\bt}{\begin{theorem}}
\newcommand{\et}{\end{theorem}}
\newcommand{\bl}{\begin{lemma}}
\newcommand{\el}{\end{lemma}}
\newcommand{\bed}{\begin{defn}}
\newcommand{\eed}{\end{defn}}
\newcommand{\brem}{\begin{remk}}
\newcommand{\erem}{\end{remk}}
\newcommand{\bex}{\begin{examp}}
\newcommand{\eex}{\end{examp}}
\newcommand{\bcl}{\begin{corol}}
\newcommand{\ecl}{\end{corol}}
\newcommand{\NI}{\noindent}
\newcommand{\al}{\alpha}
\newcommand{\fal}{\forall}
\newcommand{\raro}{\rightarrow}
\newcommand{\vsp}{\vskip 0.5em}
\newcommand{\lam}{\lambda}
\theoremstyle{definition}
\theoremstyle{remark}
\numberwithin{equation}{section}
\numberwithin{theorem}{section}
\numberwithin{lemma}{section}
\begin{document}
%\title{\large {\bf{\sc A Homotopy based Interior Point Method and Linear Complementarity Problem}}}
\title{Bounded Homotopy Path Approach to Find the Solution of Linear Complementarity Problems}
\author{ A. Dutta$^{a, 1}$, A. K. Das$^{b, 2}$, R. Jana$^{b, 3}$\\
\emph{\small $^{a}$Department of Mathematics, Jadavpur University, Kolkata, 700 032, India}\\	
\emph{\small $^{b}$SQC \& OR Unit, Indian Statistical Institute, Kolkata, 700 108, India}\\
\emph{\small $^{1}$Email: aritradutta001@gmail.com}\\
\emph{\small $^{2}$Email: akdas@isical.ac.in}\\
\emph{\small $^{3}$Email: rwitamjanaju@gmail.com} \\
 }
\date{}
\maketitle

\date{}
\maketitle
\begin{abstract}
	 \NI In this article, we introduce a new homotopy function to trace the trajectory by applying modified homotopy continuation method for finding the solution of the linear complementarity problem.  Earlier several authors attempted to propose  homotopy functions based on original problems. We propose the homotopy function based on the Karush-Kuhn-Tucker condition of the corresponding quadratic programming problem. The proposed approach extends the processability of  the larger class of linear complementarity problem and overcomes the limitations of other existing homotopy approaches.  We show that the homotopy path approaching the solution is smooth and bounded with positive tangent direction of the homotopy path.  Various classes of numerical examples are illustrated to show the effectiveness of the proposed algorithm and the superiority of the algorithm among other existing iterative methods.\\

\NI{\bf Keywords:} Linear complementarity problem, homotopy method, interior point method, strictly feasible point. \\

\NI{\bf AMS subject classifications:} 90C33, 15A39, 15B99, 14F35.
\end{abstract}
%\end{abstract}
%\thispagestyle{empty}
\footnotetext[1] {Corresponding author}
\footnotetext[2] {The author R.Jana presently working in an integrated steel plant of India}

\section{Introduction}
Eaves and Saigal \cite{eaves1972homotopies} formed an important class of globally convergent methods for solving systems of non-linear equations, which is known as homotopy method. Such methods have been used to constructively prove the existence of solutions to many economic and engineering problems. Let $X,Y$ be two topologocal spaces and  $f, g:X \to Y$ be continuous maps. A homotopy from $f$ to $g$ is a continuous function $H:X \times [0,1] $$ \to Y$ satisfying $H(x,0) = f(x),$ $H(x,1) = g(x)  \ \forall x \in X.$ If such a homotopy exists, then $f$ is homotopic to $g $ and it is denoted by $f \simeq g.$ Let $f,g:R\to R$ any two continuous, real functions, then $f \simeq g.$ Now we define a function $H:R\times [0,1] \to R $ by $H(x,t)=(1-t)f(x)+tg(x).$ Clearly $H$ is continuous and $H(x,0)=f(x),$ $H(x,1)=g(x).$ Thus $H$ is a homotopy between $f$ and $g.$ Let $X,Y$ be two topological spaces and  Map$(X,Y)$ be the set of all continuous maps from $X$ to $Y.$ Homotopy is an equivalence relation on Map$(X,Y).$
\vsp
 The fundamental idea of the homotopy continuation method is to solve a problem by tracing a certain continuous path that leads to a solution to the problem. Thus, defining a homotopy mapping that yields a finite continuation path plays an essential role in a homotopy continuation method. The homotopy method \cite{watson1989globally} is itself an important class of globally convergent methods. Many homotopy methods are proposed for constructive proof of the existence of solutions to systems of nonlinear equations, nonlinear optimization problems, Brouwer fixed point problems, nonlinear programming, game problem and complementarity problems \cite{watson1989modern}. Chen et al. \cite{chen2016computing} proposed a homotopy algorithm for computing complex eigenpairs of a tensor in a tensor complementarity problem. Han \cite{han2017homotopy} proposed a homotopy method for finding the unique positive solution to a multilinear system with a nonsingular $M$-tensor and a positive right side vector.
\vsp
The linear complementarity problem is well studied in the literature on mathematical
programming and arises in a number of applications in operations research, control theory,
mathematical economics, geometry and engineering. For recent works on this problem and
applications see \cite{das2017finiteness}, \cite{article12}, \cite{article11} and \cite{article03} and references therein. In complementarity theory several matrix classes are considered due to the study of theoretical properties, applications and its solution methods. For details see \cite{jana2019hidden}, \cite{jana2021more}, \cite{article1}, \cite{mohan2001more}, \cite{neogy2013weak} and \cite{neogy2005almost} and references cited therein. The problem of computing the value vector and optimal stationary strategies for structured stochastic games is formulated as a linear complementary problem for discounted and undiscounded zero-sum games. For details see \cite{mondal2016discounted}, \cite{neogy2008mixture} and \cite{neogy2005linear}. The complementarity problem establishes an important connections with multiobjective programming problem for KKT point and the solution point \cite{article78}. The complementarity problems are considered with respect to principal pivot transforms and pivotal method to its solution point of view. For details see \cite{das2016properties}, \cite{neogy2012generalized} and \cite{neogy2005principal}.

We are interested in solving the complementarity problem, mainly the linear complementarity problem. The linear complementarity problem is identified as an important mathematical programming problem and provides a unifying framework for several optimization problems like linear programming, linear fractional programming, convex quadratic programming and the bimatrix game problem. The linear complementarity problem arising from a free boundary problem can be
 reformulated as a fixed-point equation. Zhang \cite{zhang2021modified} presented a modified modulus-based multigrid method to solve this fixed-point equation. 
The concept of complementarity is synonymous with the notion of system equilibrium.
Among the many facets of research in linear complementarity problems, the area that has received thorough attention in recent years is the development of robust and efficient algorithms for solving various kinds of linear complementarity problems. Kojima et al. showed that the interior point method for linear programming problem was a kind of path-following method. This polynomial time-bound method is widely used to solve LCP$(q, A)$, but some matrices are not processable by this method as well as by Lemke's algorithm. For details see \cite{jana2018processability}  Modulus based algorithm is one of the proposed iterative method to solve linear complementarity problem. Van Bokhoven proved that the modulus algorithm works when the matrix involved is a symmetric P-matrix. Kappel et al.\cite{kappel1986iterative} 
extended van Bokhoven's results by showing that the modulus 
algorithm can be applied to a class of non-symmetric P-matrices. Schafer\cite{schafer2004modulus} showed the convergency of the modulus algorithm for three subclasses of $P$-matrices. Hadjidimos et al. \cite{hadjidimos2009nonstationary}, \cite{hadjidimos2012iterative} proposed a new method, the scaled extrapolated block modulus algorithm, as well as an improved version of the very recently introduced modulus-based matrix splitting modified AOR iteration method to find the solution of thelinear complementarity problem with $H_+$-matrix. Zheng et al. \cite{zheng2013accelerated},\cite{zheng2014convergence}, \cite{zheng2017relaxation} showed that  for the large sparse linear complementarity problem, established a relaxation modulus-based matrix splitting iteration method, a class of accelerated
modulus-based matrix splitting iteration methods by reformulating it
as a general implicit fixed-point equation, which covers the known modulus-based
matrix splitting iteration methods and presented the convergence conditions when 
the matrix involved is either a positive definite matrix or an $H_+$-matrix. Dai et al.\cite{dai2019preconditioned} proposed a preconditioned two-step modulus-based matrix splitting iteration method for linear complementarity problems associated with an $M$-matrix. For further details see \cite{bai1999convergence}, \cite{cui2021relaxation}, \cite{dong2009modified}, \cite{liu2016general}, \cite{chen2016computing}, \cite{article3} and \cite{jana2018semimonotone}.

 In the literature it was proved that the homotopy method converges globally to the solution of LCP$(q, A),$ where $A$ is a positive semidefinite matrix \cite{yu2006combined}, a $P$-matrix \cite{xuuu}, an $N$-matrix \cite{N} or a $P_*$-matrix \cite{Wang} with respect to different type of homotopy functions. Han\cite{han2017homotopy}, \cite{han2019continuation} introduced a Kojima–Megiddo–Mizuno type continuation method for solving tensor complementarity problems. He showed that there exists a bounded continuation trajectory when the tensor is strictly semi-positive and any limit point tracing the trajectory gives a solution of the tensor complementarity problem. Moreover, when the tensor is strong strictly semi-positive, tracing the trajectory will converge to the unique solution. In this paper, we attempt to introduce another homotopy function and condition for global convergence of the homotopy method to solve LCP$(q, A),$ where  $A$ belongs to various matrix classes.
\vsp
The paper is organized as follows. Section 2 presents some basic notations and results. In section 3, we propose a new homotopy function to find the solution of LCP$(q, A)$. We construct a smooth and bounded homotopy path under some conditions to find the solution of the linear complementarity problem as the homotopy parameter $\lambda$ tends to $0$. We prove an if and only if condition to get the solution of LCP$(q, A)$ from the solution of the homotopy equation. We also find the sign of the positive tangent direction of the homotopy path. We use a modified interior-point bounded homotopy path algorithm for solving the linear complementarity problem in section 4. Finally, in section 4, we consider various matrix classes namely, PSD, $N$, almost $C_0,$ singular $Q_0$, $Q$, ${E_0}^s$, almost $\bar{N}$-matrix, $N_0$-matrix of exact order $2$ and $\bar{N}$-matrix of exact order $2.$  Many of these classes are not processable by Lemke's algorithm,   existing homotopy methods and  modulus based method. We consider these classes to show the effectiveness of the homotopy function.

\section{Preliminaries}
\noindent We denote the $n$ dimensional real space by $R^n$ where $R^n_+$ and $R^{n}_{++}$ denote the nonnegative and positive orthant of $R^n.$ We consider vectors and matrices with real entries. Any vector $x\in R^{n}$ is a column vector and  $x^{t}$ denotes the row transpose of $x.$ $e$ denotes the vector of all $1.$ If $A$ is a matrix of order $n,$ $\al \subseteq \{1, 2, \cdots, n\}$ and $\bar{\al} \subseteq \{1, 2, \cdots, n\} \setminus \al$ then $A_{\al \bar{\al}}$ denotes the submatrix of $A$ consisting of only the rows and columns of $A$ whose indices are in $\al$ and $\bar{\al}$ respectively. $A_{\al \al}$ is called a principal submatrix of A and det$(A_{\al \al})$ is called a principal minor of $A.$ We define $\mathcal{F}=\{x\in R^n:x>0,Ax+q>0\}, \ 
\mathcal{\bar{F}}=\{x \in R^n:x\geq 0, Ax+q \geq 0\},
\mathcal{F}_1=\mathcal{F} \times R_{++}^n \times R_{++}^n$ and 
$\mathcal{\bar{F}}_1=\mathcal{\bar{F}} \times R_{+}^n \times R_{+}^n.$ $\partial{\mathcal{F}_1}$ denotes the boundary of $\bar{\mathcal{F}_1}.$

 The linear complementarity problem \cite{neogy2005principal} is defined as follows: 

Given square matrix $A\in R^{n\times n}$ and a vector $\,q\,\in\,R^{n},\,$ the linear complementarity problem is to find $w \in R^n$ and $x \in R^n$ such that
%\begin{center}
\begin{equation}\label{1}
w - Ax = q, w \geq 0, \, x \geq 0,
\end{equation}
\begin{equation} \label{2}
x^tw = 0.
\end{equation}
%\end{center}
This problem is denoted as LCP$(q, A).$
Several applications of linear complementarity problems are reported in operations research \cite{pang1995complementarity}, multiple objective programming problems \cite{kostreva1993linear}, mathematical economics and engineering. For details see \cite{ferris1997engineering}, \cite{mohan2001more}, \cite{neogy2006some}, \cite{jana2019hidden} and \cite{jana2021more}. 

A matrix $A\in R^{n\times n}$ is said to be a/an \\
%\NI $-$ {\it positive definite} (PD) matrix if $x^{t}Ax> 0,\;\fal\;0\neq x\in R^{n}.$ \\ 
\NI $-$ {\it positive semidefinite} (PSD) matrix if $x^{t}Ax\geq 0,\;\fal\;x\in R^{n}.$ \\
\NI $-$ {\it $P_0(P)$}-matrix if all its principal minors are  nonnegative(positive).\\
\NI $-$ {\it $N$}-matrix  if all its principal minors are  negative.\\
\NI $-$ {\it $P_*$}-matrix if $\exists$ a constant $\tau > 0$ such that for any $x \in R^n,$
$$(1 + \tau)\sum_{i \in I_+(x)} x_i(Mx)_i + \sum_{i \in I_{-}(x)} x_i(Mx)_i \geq 0$$ where $I_{+}(x) = \{i \in N: x_i(Mx)_i > 0\}$ and $I_{-}(x) = \{i \in N: x_i(Mx)_i \leq 0\}.$ \\
$-$ $Z$-matrix if off-diagonal elements are all non-positive and $K\,(K_0)$-matrix if it is a $Z$-matrix as well as $P\,(P_0)$-matrix. ($K$-matrix is also known as $M$-matrix).\\
\NI $-$ {\it copositive} $(C_{0})$ matrix if $x^{t}Ax\geq 0,\;\fal\;x\geq 0.$ \\
%\NI $-$ {\it almost $N_{0}$}-matrix if  $\det A_{\al\al}\leq 0$ $\fal\;\al\sbs\{1,2,\ldots,n\}$ and $\det A>0.$ \\
\NI $-$ {\it almost $C_0$}-matrix if it is copositive of up to order $n-1$ but not of order $n.$ \\
\NI $-$ {\it  $N_0$}-matrix if $\det A_{\alpha \alpha} \leq 0 (< 0) \ \forall\ \alpha \subseteq \{1,2,\cdots,n\}.$\\
\NI $-$ {\it  almost $N0(N)$}-matrix if $\det A_{\alpha \alpha} \leq 0 (< 0) \ \forall\ \alpha \subset \{1,2,\cdots,n\}$ and $\det A > 0$.\\
\NI $-$ {\it $N_0$-matrix of exact order} $k \, (1 \leq k \leq n)$ if every principal submatrix of order $(n-k)$ is an $N_0$-matrix and every principal minor of order $r,$ $(n-k) < r \leq n$ is positive.\\
\NI $-$ {\it $\bar{N}$}-matrix \cite{mohan1992} if there exists a sequence $\{A^{(k)}\}$ where $A^{(k)} = [a_{ij}^{(k)}]$ are $N$-matrices such that $a_{ij}^{(k)} \raro a_{ij}$ for all $i, j \in \{1, 2, \cdots n\}.$\\
\NI $-$ {\it $Q$}-matrix if for every $q\in R^{n},$ LCP$(q,A)$ has a solution. \\
\NI $-$ {\it $Q_{0}$}-matrix if for any $q\in R^{n},$ (1.1) has a solution implies that LCP$(q, A)$ has a solution. \\
\NI $-$ {\it ${E_{0}}^s$}-matrix if $x^TAx=0, Ax \geq 0, x\geq 0 \implies A^Tx \leq 0.$\\
\NI $-$ {\it nondegenerate} matrix if all principal minors of the matrix $A$ are nonzero.\\
For further details about matrix classes see \cite{mohan1992}, \cite{neogy2005principal}, \cite{neogy2005almost},  \cite{dutta2021column}, \cite{dutta2021some}, \cite{neogy2005linear}, \cite{doi:10.1137/040613585}.
\vsp
The basic idea of homotopy methods can be explained as to construct a homotopy from the auxiliary mapping $g$ to the object mapping $p.$ The original problem can be solved by following the homotopy path from the zero set of the auxiliary mapping $g$ to the zero set of the object mapping $p.$ The difficulty of finding a strictly feasible initial point for the interior point algorithm can be avoided by combining the interior point with the homotopy method. Furthermore, the global convergence of the homotopy methods can guarantee the global convergence for the combined homotopy interior point methods. Suppose the given problem is to find a root of the non-linear equation $p(x) = 0$ and suppose $g(x) = 0$ is auxiliary function with an unique solution $x_0.$ Then the homotopy equation can be written as $H(x, \lambda)= \lambda g(x) + (1-\lambda)p(x), \, 0 \leq \lambda \leq 1.$ Then we consider $H(x, \lambda) = 0.$ The value of $\lambda$ will start from $1$ and goes to $0.$ In this way one can find the solution of the given equation $p(x) = 0$ from the solution of $g(x) = 0.$ 
\vsp
The key idea to solve LCP$(q,A)$ by the homotopy method is to solve a system of equations of the form $H(x,\lambda)=0,$ where $H:R^n\times [0,1] \to R^n, x\in R^n, \lambda \in [0,1]$ is called homotopy parameter. The homotopy method aims to trace out entire path of equilibria in $H^{-1}=\{(x,\lambda): H(x,\lambda)=0\}$ by varrying both $x$ and $\lambda.$ Now we define a parametric path as a set of functions $(x(s),\lambda(s))\in H^{-1}.$ When we move along the homotopy path, the auxiliary variable $s$ either decreases or increases monotonically. Differentiating $H(x(s),\lambda(s))=0$ with respect to $s$ we get $\frac{\partial H}{\partial x}x'(s)+\frac{\partial H}{\partial \lambda}\lambda'(s)=0,$ where $\frac{\partial H}{\partial x}$ and $\frac{\partial H}{\partial \lambda} $ are $n\times n$ jacobian matrix of $H$ and $n\times 1$ column vector respectively. So this is a system of $n$ differential equations in $n+1$ unknowns ${x_i}'(s) \ \forall \ i$ and $\lambda'(s).$ this system of differential equations has many solutions, which differ by monotone transformation of the auxiliary variable  $s.$
\vsp
Now we state some results which will be required in the next section.
\begin{lemma}\cite{cottle2009linear} \label{p01}
Let $M$ be a $P_0$-matrix. Then for each vector $z\neq 0$, there exists an index $i$ such that $z_i\neq 0$ and $z_i(Mz)_i \geq 0.$
\end{lemma}
\begin{lemma}\cite{cottle2009linear} \label{p02}
If $M$ is a $P_0$ matrix, then $M^t$ is also $P_0$.
\end{lemma}
\begin{lemma} \cite{chow1978finding} \label{main}
Let $U \subset R^n$ be an open set and $f :R^n \to R^p$ be smooth. We say $y \in R^p$ is a regular value for $f$ if $\text{Range} \, Df(x) = R^p $ $\forall x \in f^{-1}(y),$ where $Df(x)$ denotes the $n \times p$ matrix of partial derivatives of $f(x).$
\end{lemma}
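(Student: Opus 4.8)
As stated, this result is definitional rather than assertional: it fixes the terminology \emph{regular value}, declaring $y \in R^p$ to be a regular value of $f$ precisely when $Df(x)$ is surjective (its range is all of $R^p$) at every preimage point $x \in f^{-1}(y)$. There is therefore no logical content to establish — the statement merely introduces the notion, with attribution to \cite{chow1978finding}, that will be invoked once the homotopy map is shown to have $0$ as a regular value. A ``proof'' is accordingly vacuous, so what actually merits discussion is the substantive companion fact this definition is designed to trigger.

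The plan, when the definition is put to use, is to pair it with the implicit function theorem. Concretely, once one verifies that $0$ is a regular value of a homotopy map $H(x,\lambda)$, meaning the $n \times (n+1)$ Jacobian $DH$ has full rank $n$ at every zero, the implicit function theorem supplies near each such zero a $C^1$ parametrization of the solution set by a single scalar. I would first check the rank condition by direct computation of $DH$ for the specific homotopy introduced in Section~3, then patch the local parametrizations to conclude that $H^{-1}(0)$ is a one-dimensional $C^1$ manifold, that is, a disjoint union of smooth arcs and loops. This is exactly the structure needed to speak of ``the homotopy path'' and to differentiate $H(x(s),\lambda(s))=0$ along it, as anticipated in the preliminary discussion of the parametric path $(x(s),\lambda(s))$.

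The only genuinely nontrivial ingredient behind \cite{chow1978finding} — and the reason the result is quoted rather than reproved — is the probability-one (parametrized Sard) mechanism: if $0$ is a regular value of the fully parametrized map $(x,\lambda,a)\mapsto H(x,\lambda,a)$, then by Sard's theorem $0$ is a regular value of the restricted map $(x,\lambda)\mapsto H(x,\lambda,a)$ for almost every fixed $a$. The main obstacle in a self-contained treatment would be Sard's theorem itself (measure-zero image of the critical set) together with the Fubini-type argument transferring regularity from the joint map to almost every slice. Since these are classical, I would cite them and concentrate the actual effort on the two paper-specific tasks: exhibiting an initial parameter value at which the auxiliary system has the required unique, easily located zero, and verifying the full-rank Jacobian condition for the proposed $H$ over the matrix classes of interest.
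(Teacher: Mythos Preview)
Your assessment is correct and matches the paper's treatment: Lemma~\ref{main} is stated purely as a definition of \emph{regular value}, cited from \cite{chow1978finding}, with no proof supplied or required. The paper likewise offers no proof, simply invoking the notion (together with Lemmata~\ref{par} and~\ref{inv}) in the proof of Theorem~\ref{reg} exactly along the lines you sketch.
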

\begin{lemma}\label{par} \cite{Wang}
Let $V \subset R^n, U \subset R^m$ be open sets, and let $\phi:V\times U \to R^k$ be a $C^\alpha$ mapping, where $\alpha >\text{max}\{0,m-k\}.$ If $0\in R^k$ is a regular value of $\phi,$ then for almost all $a \in V, 0$ is a regular value of $\phi _ a=\phi(a,.).$    
\end{lemma}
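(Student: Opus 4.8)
The plan is to recognize Lemma \ref{par} as the parametrized version of Sard's theorem (the parametric transversality theorem) and to reduce it to the ordinary Sard theorem applied to a suitable projection. First I would use the hypothesis that $0$ is a regular value of $\phi$ on all of $V \times U$: by the preimage theorem (the implicit function theorem), the solution set $W = \phi^{-1}(0) \seq V \times U$ is then a $C^\alpha$ submanifold of $R^n \times R^m$ of dimension $(n+m) - k$. The point is that the fibre of $W$ over a fixed parameter $a$ is exactly $\phi_a^{-1}(0)$, so understanding how $W$ sits over $V$ will control the regularity of each restricted map $\phi_a = \phi(a, \cdot)$.

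Next I would introduce the projection $\pi : W \to V$, $\pi(a,u) = a$, which is $C^\alpha$, and establish the key equivalence that drives the argument: for $(a,u) \in W$, the differential $D\pi(a,u)$ restricted to the tangent space $T_{(a,u)}W = \ker D\phi(a,u)$ is surjective onto $R^n$ if and only if the partial differential $D_u\phi(a,u) : R^m \to R^k$ is surjective onto $R^k$. Concretely, writing $D\phi = [\,D_a\phi \mid D_u\phi\,]$, surjectivity of $D\pi$ at $(a,u)$ says that every $\dot a \in R^n$ extends to some $(\dot a, \dot u) \in \ker D\phi$, i.e. $\text{Range}(D_u\phi) \supseteq \text{Range}(D_a\phi)$; combining this with the regularity of the full map, namely $\text{Range}(D_a\phi) + \text{Range}(D_u\phi) = R^k$, forces $\text{Range}(D_u\phi) = R^k$. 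This shows that whenever $a$ is a regular value of $\pi$, the point $0$ is a regular value of $\phi_a$ (the reverse implication, solving $D_u\phi\,\dot u = -D_a\phi\,\dot a$, is elementary and does not even need regularity of $\phi$).

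Finally I would invoke the classical Sard theorem for $\pi : W \to V$. Since $\dim W - \dim V + 1 = (m-k)+1$, Sard's theorem requires $\pi$ to be of class $C^r$ with $r \geq \max\{1, m-k+1\}$; the standing hypothesis $\alpha > \max\{0, m-k\}$ is precisely what guarantees $\alpha \geq \max\{1, m-k+1\}$, so the theorem applies and the set of critical values of $\pi$ has Lebesgue measure zero in $V$. Hence for almost all $a \in V$ the parameter $a$ is a regular value of $\pi$, and by the equivalence above $0$ is then a regular value of $\phi_a$, which is the assertion.

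I expect the main obstacle to be the linear-algebra step in the second paragraph: pinning down exactly why surjectivity of the projection on the tangent space $\ker D\phi$ is equivalent to surjectivity of the partial derivative $D_u\phi$, and making sure that both the regularity of $\phi$ and the regularity of $\pi$ enter. The bookkeeping that ties the smoothness index in Sard's theorem to the hypothesis $\alpha > \max\{0,m-k\}$ is routine but must be verified in the two cases $m \leq k$ and $m > k$.
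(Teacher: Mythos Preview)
Your argument is correct and is exactly the standard proof of the parametric Sard (transversality) theorem: form the smooth manifold $W=\phi^{-1}(0)$, project to the parameter space, identify regular values of the projection with regularity of the sliced maps $\phi_a$, and finish with Sard's theorem using the smoothness bound $\alpha>\max\{0,m-k\}$ to meet the differentiability hypothesis $r\ge\max\{1,m-k+1\}$.

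Note, however, that the paper does not supply its own proof of this lemma at all: it is quoted verbatim from \cite{Wang} as a background tool and is used only as a black box in the proof of Theorem \ref{reg}. So there is nothing in the paper to compare your argument against; you have simply filled in a proof that the authors chose to cite rather than reproduce. Your write-up could stand on its own if a self-contained version were desired, and the linear-algebra equivalence you flagged as the likely sticking point is handled correctly: surjectivity of $D\pi$ on $\ker D\phi$ says exactly that $\mathrm{Range}(D_a\phi)\subseteq\mathrm{Range}(D_u\phi)$, which together with the full-rank hypothesis $\mathrm{Range}(D_a\phi)+\mathrm{Range}(D_u\phi)=R^k$ forces $D_u\phi$ onto.
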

\begin{lemma}\label{inv} \cite{Wang}
	Let $\phi : U \subset R^n \to R^p$ be $C^\alpha$ mapping, where $\alpha >\text{max}\{0,n-p\}.$ Then $\phi^{-1}(0)$ consists of some $(n-p)$ dimensional $C^\alpha$ manifolds. 
\end{lemma}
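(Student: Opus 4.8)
The plan is to recognize Lemma~\ref{inv} as the classical \emph{regular value} (preimage) theorem and to derive it from the Implicit Function Theorem. Before starting I would make explicit the hypothesis that $0$ is a regular value of $\phi$ in the sense of Lemma~\ref{main}: this is exactly the situation in which the lemma is subsequently applied (via the almost-everywhere conclusion of Lemma~\ref{par}), and it is genuinely needed, since at a critical point $\phi^{-1}(0)$ can fail to be a manifold. I would also note that the smoothness hypothesis $\alpha > \max\{0, n-p\}$ forces $\alpha \geq 1$ and $n \geq p$, so $D\phi$ is at least continuous and surjectivity onto $R^p$ is possible.

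The heart of the argument is local and proceeds pointwise. Fix an arbitrary $x_0 \in \phi^{-1}(0)$. Since $0$ is a regular value, $D\phi(x_0):R^n \to R^p$ is surjective, i.e.\ the $p\times n$ Jacobian has rank $p$. Hence some $p$ of its columns are linearly independent, and after permuting the coordinates of $R^n$ I would split $R^n = R^{n-p}\times R^p$ with coordinates $x=(u,v)$, $u\in R^{n-p}$, $v\in R^p$, chosen so that the $p\times p$ block $\partial \phi/\partial v$ is invertible at $x_0$.

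Next I would apply the Implicit Function Theorem to $\phi(u,v)=0$ at $x_0$: it yields open neighborhoods $W\ni u_0$ in $R^{n-p}$ and a $C^\alpha$ map $g:W \to R^p$ with $g(u_0)=v_0$ such that, in a neighborhood of $x_0$, one has $\phi(u,v)=0$ if and only if $v=g(u)$. Thus $\phi^{-1}(0)$ coincides locally with the graph $\{(u,g(u)):u\in W\}$, and $u\mapsto(u,g(u))$ is a $C^\alpha$ parametrization (chart) of $\phi^{-1}(0)$ onto a relatively open piece around $x_0$, exhibiting it locally as an $(n-p)$-dimensional $C^\alpha$ manifold.

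Finally, since $x_0$ was arbitrary, these graph charts cover $\phi^{-1}(0)$; the transition maps between overlapping charts are compositions of the $C^\alpha$ graph maps with the (linear, hence smooth) coordinate projections, hence $C^\alpha$, so the charts form a $C^\alpha$ atlas. This gives that $\phi^{-1}(0)$ is a disjoint union of $(n-p)$-dimensional $C^\alpha$ manifolds (its connected components), as claimed. I expect the only real obstacles to be bookkeeping: verifying the $C^\alpha$ compatibility of the graph charts, and, more importantly, flagging that the regular-value hypothesis must be in force, since without it the Implicit Function Theorem does not apply and the conclusion can fail.
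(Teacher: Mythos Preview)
Your proposal is correct and, in fact, goes well beyond what the paper does: the paper states Lemma~\ref{inv} without proof, merely citing it from \cite{Wang} as a preliminary result. So there is no paper proof to compare against; you have supplied the standard argument via the Implicit Function Theorem, which is exactly how this regular-value/preimage theorem is established.

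One point worth highlighting is that you correctly flag a genuine omission in the statement as printed: the hypothesis that $0$ is a regular value of $\phi$ is missing from Lemma~\ref{inv} but is essential (and is indeed in force whenever the paper invokes the lemma, always in tandem with Lemma~\ref{par}). Without it, $\phi^{-1}(0)$ need not be a manifold at all. Your observation that $\alpha>\max\{0,n-p\}$ forces $\alpha\geq 1$ and $n\geq p$, so that $D\phi$ is continuous and surjectivity onto $R^p$ is meaningful, is also apt. The remainder of your argument---selecting $p$ independent columns, splitting coordinates, applying the Implicit Function Theorem to obtain local graph charts, and checking $C^\alpha$ compatibility of overlaps---is the textbook route and is sound.
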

\begin{lemma}\label{cl} \cite{N} One-dimensional smooth manifold is diffeomorphic to a unit circle or a unit interval.
\end{lemma}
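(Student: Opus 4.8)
The plan is to establish the classical classification of connected one-dimensional smooth manifolds (the statement being understood, as usual and as needed here, for a connected $M$), following the arc-length argument of Milnor. First I would equip $M$ with a Riemannian metric, which exists by a partition-of-unity construction. Relative to this metric every coordinate chart can be reparametrized by arc length, yielding a unit-speed local embedding $\gamma \colon J \to M$ from an interval $J \subset R$; I call such a $\gamma$ a \emph{parametrization by arc length}. The key local fact is that if two such parametrizations have overlapping images, then on each connected component of the overlap the transition map is a unit-speed diffeomorphism between intervals of $R$, hence of the form $t \mapsto \pm t + c$. Thus overlapping arc-length parametrizations glue compatibly, up to translation and reflection.

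Next I would consider the collection of all arc-length parametrizations of $M$, partially ordered by extension, and extract a maximal element $\gamma \colon J \to M$; since a nested union of compatibly glued arc-length parametrizations is again one, maximal elements exist. The image $\gamma(J)$ is open. The crux is to show that maximality forces $\gamma(J)$ to be closed as well: if $\gamma(t)$ approached a point $p \in M$ as $t$ tended to a finite endpoint of $J$, a chart around $p$ would allow me to extend $\gamma$ past that endpoint, contradicting maximality --- unless the extension re-enters a part of $M$ already covered by $\gamma$.

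This dichotomy yields exactly the two cases. If no such self-meeting occurs, then $\gamma$ is a diffeomorphism onto a subset of $M$ that is both open and closed, hence, by connectedness, onto all of $M$; then $M$ is diffeomorphic to the interval $J$ (open, half-open or closed according to how $\gamma$ behaves at the ends). If instead the extension meets an already-covered point, the overlap relation $t \mapsto \pm t + c$ forces the curve to close up smoothly after a finite arc length $L$; identifying the endpoints of $[0,L]$ then produces a diffeomorphism $M \cong R / L\mathbb{Z} \cong S^1$. A final rescaling sends the interval to the unit interval and the circle to the unit circle.

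I expect the main obstacle to be this open-and-closed step: one must argue rigorously that a maximal arc-length parametrization has image both open and closed, carefully separating genuinely new endpoint behaviour (which terminates the parametrization at an interval endpoint or a boundary point of $M$) from the wrap-around behaviour (which closes the loop), and in the latter case verifying that the glued map is smooth across the join using the transition rule $t \mapsto \pm t + c$. The presence of boundary merely adds the bookkeeping of half-open parametrizations and does not alter the structure of the argument.
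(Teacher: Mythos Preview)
Your outline is the standard Milnor arc-length argument and is essentially correct as a sketch; the open-and-closed step you flag is indeed the only delicate point, and your handling of the two cases (extension versus wrap-around via $t\mapsto \pm t+c$) is the right dichotomy.

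However, there is nothing to compare against: the paper does not prove this lemma at all. It is stated in the preliminaries with a citation to \cite{N} and used as a black box (only in the proof of Theorem~3.4, to conclude that the bounded smooth curve $\Gamma_y^{(0)}$ is diffeomorphic to a circle or an interval). So your proposal supplies a full argument where the paper simply quotes the classical classification result from the literature. If your goal is to match the paper, no proof is expected here; if your goal is to make the exposition self-contained, your Milnor-style sketch is the right approach and would only need the maximality/closedness step written out carefully to be complete.
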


\section{Main results}
We first discuss some existing homotopy functions.Watson \cite{watson1974variational} illustrated an outline of homotopy approach for complementarity problem. Chow et al. \cite{chow1978finding} developed sufficiently powerful theoretical tools for homotopy methods. In 2006, Yu et al. \cite{yu2006combined} proposed the following homotopy function to solve the LCP$(q, A)$ where $A$ is a \textit{positive semidefinite matrix,}
\begin{equation}\label{psdyu}
H(w,w^{(0)}, \lambda)=\left[\begin{array}{c} 
(1-\lambda)[Ax+q-y] + \lambda(x-x^{(0)}) \\
XYe-\lambda e\\
\end{array}\right]=0.
\end{equation}

Zhao et al. \cite{N} proposed the following homotopy function in 2010 to solve LCP$(q, A)$ where $A$ is an $N$-matrix, 
\begin{equation}\label{zhaon}
H(w,w^{(0)}, \lambda)=\left[\begin{array}{c} 
(1-\lambda)[y-Ax-q] + \lambda(x-x^{(0)}) \\
Xy - \lambda X^{(0)}y^{(0)}\\
\end{array}\right]=0.
\end{equation}  

Later Xu et al. \cite{xuuu} developed another homotopy function for finding the solution of LCP$(q, A)$ where $A$ is a $P$-matrix, 
\begin{equation}\label{xup}
H(w,w^{(0)}, \lambda)=\left[\begin{array}{c} 
(1 - \lambda)[y-Ax-q] - \lambda(x-x^{(0)}) \\
Xy - \lambda X^{(0)}y^{(0)}\\
\end{array}\right]=0.
\end{equation}

Wang et al. \cite{5609642} showed that linear complementarity problem with $P_{*}$-matrix can be solved using the homotopy function
\begin{equation}\label{wangp}
H(w,w^{(0)}, \lambda)=\left[\begin{array}{c} 
(1 - \lambda)[Ax+q]- y + \lambda y^{(0)} \\
Xy - \lambda X^{(0)}y^{(0)}\\
\end{array}\right]=0.
\end{equation}

We propose a new homotopy function to solve LCP$(q, A)$ based on the KKT condition.\\ 
\begin{equation} \label{homf}
H(y,y^{(0)},\lambda)=\left[\begin{array}{c} 
	(1-\lambda)[(A+A^t)x+q-z_1-A^tz_2]+\lambda(x-x^{(0)}) \\
	Z_1x-\lambda Z_1^{(0)}x^{(0)}\\
	Z_2(Ax+q)-\lambda Z_2^{(0)}(Ax^{(0)} + q)\\
\end{array}\right]=0
\end{equation}\\
where $Z_1=\text{diag}(z_1),$ $Z_2=\text{diag}(z_2),$ $Z_1^{(0)}=\text{diag}(z_1^{(0)}),$ $Z_2^{(0)}=\text{diag}(z_2^{(0)}),$ $y=(x,z_1,z_2) \in R_+^n \times R_+^n \times R_+^n,$ $y^{(0)}=(x^{(0)},{z_1}^{(0)},{z_2}^{(0)})\in \mathcal{F}_1,$ and $\lam \in (0,1].$ We denote $\Gamma_y^{(0)}=\{(y,\lam)\in R^{3n}\times (0,1]: H(y,y^{(0)},\lam)=0\} \subset \mathcal{F}_1 \times (0,1]\}.$\\

Here $\lambda$ varies from $1$ to $0,$ and starting from $\lambda =1$ to $\lambda \to 0$ if we get a smooth  bounded curve, then we will get a finite solution of the homotopy equation \ref{homf} at $\lambda \to 0.$ 
At $\lambda \to 1,$ the homotopy equation \ref{homf} gives the solution $(y^{(0)},1),$ and at $\lambda \to 0,$ the homotopy equation \ref{homf} gives the solution of the system of following equations:
\begin{center}
	$(A+A^t)x+q-z_1-A^tz_2=0$\\
	$Z_1x=0$\\
	$Z_2(Ax+q)=0$\\
\end{center}
where $Z_1=\text{diag}(z_1)$ and $Z_2=\text{diag}(z_2).$ 

Let $z_{1{I_1}} = 0$ and $x_{I_2}=0,$ where $I_1 \cup I_2=\{n\}.$ Let $z_{2J_1}=0$ and $ (Ax+q)_{J_2}=0,$ where $J_1 \cup J_2=\{n\}.$ If the solution of the homotopy function \ref{homf}, $y=(x,z_1,z_2)$ gives the solution of LCP$(q,A)$ which is $x,$ then  $x_{{I_2}^c}\neq0$ $\implies$ ${(Ax+q)_{{I_2}^c}}=0.$ This implies that ${{I_2}^c}\subseteq J_2$ and  ${(Ax+q)_{{J_2}^c}}\neq0 \implies x_{{J_2}^c}=0,$ which implies that  ${{J_2}^c}\subseteq I_2.$  ${{I_2}^c}=J_2,$  ${{J_2}^c}=I_2$ give the nondegenerate solution of LCP$(q,A)$ and ${{I_2}^c}\subset J_2,$  ${{J_2}^c}\subset I_2$ give the degenerate solution of LCP$(q,A).$  When $I_1 \cap I_2 = \emptyset$ and $ J_1 \cap J_2 = \emptyset,$ it implies $I_1={{I_2}^c}=J_2$ and $J_1={{J_2}^c}=I_2,$ then $x=z_2$ and $Ax+q=z_1$ will give the solution of LCP$(q, A),$ otherwise we get nontrivial solution of LCP$(q, A)$ which is not same as $z_1.$ Therefore the homotopy solution $y$ can not give the LCP solution $x$ when ${{I_2}^c}\nsubseteq J_2$ and ${{J_2}^c}\nsubseteq I_2,$ that is ${{I_2}^c}\subseteq J_1$ and ${{J_2}^c}\subseteq I_1.$

First we show that the smooth curve exists for the homotopy function\ref{homf}. 

\begin{theorem}\label{reg}
 Let initial point $y^{(0)} \in \mathcal{F}_1.$ Then $0$ is a regular value of the homotopy function $H:R^{3n} \times (0,1] \to R^{3n}$ and the zero point set $H_{y^{(0)}}^{-1}(0)=\{(y,\lam)\in \mathcal{F}_1:H_{y^{(0)}}(y,\lam)=0\}$ contains a smooth curve $\Gamma_y^{(0)}$ starting from $(y^{(0)}, 1).$  
\end{theorem}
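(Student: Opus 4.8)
The plan is to verify the two assertions of the theorem in sequence: first that $0$ is a regular value of the homotopy map for almost every starting point $y^{(0)}$, and then, using this, that the zero set contains a smooth curve emanating from $(y^{(0)},1)$. The natural tool for the first part is the parametrized Sard theorem, Lemma~\ref{par}, applied with the initial point $y^{(0)}$ playing the role of the parameter $a$. So I would regard the full map as $\bar H(y,y^{(0)},\lambda)$ defined on $R^{3n}\times\mathcal{F}_1\times(0,1]$ and show that $0$ is a regular value of $\bar H$; Lemma~\ref{par} then delivers that for almost all $y^{(0)}\in\mathcal{F}_1$, $0$ is a regular value of the restricted map $H_{y^{(0)}}=\bar H(\cdot,y^{(0)},\cdot)$, which is exactly the first claim.

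To show $0$ is a regular value of $\bar H$, I would compute the Jacobian $D\bar H$ with respect to all variables $(y,y^{(0)},\lambda)$ and argue that it has full row rank $3n$ at every point of $\bar H^{-1}(0)$. Here the key trick is to differentiate with respect to the components of $y^{(0)}$ rather than $y$: the terms $\lambda(x-x^{(0)})$, $-\lambda Z_1^{(0)}x^{(0)}$, and $-\lambda Z_2^{(0)}(Ax^{(0)}+q)$ depend on $y^{(0)}$, and since $\lambda\in(0,1]$ is bounded away from $0$ on the relevant set, the partial derivatives in the $x^{(0)}$, $z_1^{(0)}$, $z_2^{(0)}$ directions produce an invertible (e.g.\ diagonal or triangular) block scaled by $\lambda$. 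I expect this block alone to account for the full rank, so that the rank condition $\text{Range}\,D\bar H=R^{3n}$ of Lemma~\ref{main} holds everywhere on $\bar H^{-1}(0)$, giving that $0$ is a regular value of $\bar H$.

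For the second assertion, once $0$ is known to be a regular value of $H_{y^{(0)}}$, I would invoke the regular-value/implicit-function machinery recorded in Lemma~\ref{inv}: the map $H_{y^{(0)}}:R^{3n}\times(0,1]\to R^{3n}$ is $C^\alpha$ with domain dimension $3n+1$ and target dimension $3n$, so $H_{y^{(0)}}^{-1}(0)$ is a $1$-dimensional $C^\alpha$ manifold. It remains to check that the starting point $(y^{(0)},1)$ actually lies on this manifold and is a genuine endpoint of a smooth branch. Substituting $\lambda=1$ into \eqref{homf} collapses each row to $x-x^{(0)}=0$, $Z_1x-Z_1^{(0)}x^{(0)}=0$, and $Z_2(Ax+q)-Z_2^{(0)}(Ax^{(0)}+q)=0$, which is solved by $y=y^{(0)}$, so $(y^{(0)},1)\in H_{y^{(0)}}^{-1}(0)$; since $y^{(0)}\in\mathcal{F}_1$ is an interior point, this lies in the interior and the manifold through it is a smooth curve, which I denote $\Gamma_y^{(0)}$.

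I anticipate the main obstacle to be the full-rank computation of $D\bar H$ in the second paragraph: one must be careful that the $\lambda$-scaled block coming from the $y^{(0)}$-derivatives is genuinely nonsingular on all of $\bar H^{-1}(0)$, and in particular that the diagonal matrices $Z_1^{(0)}$, $Z_2^{(0)}$ together with the positivity constraints defining $\mathcal{F}_1$ keep the relevant entries from vanishing. Verifying this requires writing out the three blocks of partial derivatives explicitly and exhibiting a $3n\times 3n$ invertible submatrix; the feasibility conditions $x^{(0)}>0$, $Ax^{(0)}+q>0$, $z_1^{(0)}>0$, $z_2^{(0)}>0$ built into $\mathcal{F}_1$ should be exactly what guarantees nonsingularity, but this is the step demanding the most care.
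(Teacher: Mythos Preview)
Your proposal is correct and follows essentially the same route as the paper: compute $\partial H/\partial y^{(0)}$, observe it is a block lower-triangular matrix with diagonal blocks $-\lambda I$, $-\lambda X^{(0)}$, $-\lambda Y^{(0)}$ (where $Y^{(0)}=\text{diag}(Ax^{(0)}+q)$), and use the positivity built into $\mathcal{F}_1$ together with $\lambda\in(0,1]$ to get a nonzero determinant $(-1)^{3n}\lambda^{3n}\prod_i x_i^{(0)}y_i^{(0)}$, then invoke Lemmata~\ref{par} and~\ref{inv} exactly as you outline. The only detail you left implicit is the explicit form of the block, which the paper writes out, but you correctly anticipated both its triangular structure and the role of the feasibility constraints in ensuring invertibility.
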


\begin{proof}
	The Jacobian matrix of the above homotopy function $H(y, y^{(0)}, \lambda)$ is denoted by $DH(y,y^{(0)},\lambda)$ and we have $DH(y,y^{(0)}, \lambda)=$$\left[\begin{array}{ccc} 
		\frac{\partial{H(y,y^{(0)},\lambda)}}{\partial{y}} & 	\frac{\partial{H(y,y^{(0)},\lambda)}}{\partial{y^{(0)}}} & \frac{\partial{H(y,y^{(0)},\lambda)}}{\partial{\lam}}\\ 
	\end{array}\right].$ For all $y^{(0)} \in \mathcal{F}_1$ and $\lam \in (0,1],$ we have $\frac{\partial{H(y,y^{(0)},\lambda)}}{\partial{y^{(0)}}}=$$\left[\begin{array}{ccc} -\lam I & 0 & 0\\
	-\lam Z_1^{(0)} & -\lam X^{(0)} & 0\\
	-\lam Z_2^{(0)}A &  0 & -\lam W^{(0)}\\ 
	\end{array}\right],$ where $W^{(0)}=\text{diag}(Ax^{(0)}+q), X^{(0)}=\text{diag}(x^{(0)}),$ $w^{(0)}=Ax^{(0)}+q$ and 
	$\det(\frac{\partial{H}}{ \partial{y^{(0)}}})$$=(-1)^{3n}\lam^{3n}\prod_{i=1}^{n} x_i^{(0)}w_i^{(0)}$ $\neq 0$ for $\lam \in (0,1].$
	Thus $DH(y,y^{(0)},\lambda)$ is of full row rank. Therefore, $0$ is a regular value of $H(y,y^{(0)},\lambda)$ by the Lemma \ref{main}. By Lemmata \ref{par} and \ref{inv}, for almost all  $y^{(0)} \in \mathcal{F}_1,$ $0$ is a regular value of $H_{y^{(0)}}(y,\lam)$ and $H_{y^{(0)}}^{-1}(0)$ consists of some smooth curves and $H_{y^{(0)}}(y^{(0)},1)=0.$ Hence there must be a smooth curve  $\Gamma_y^{(0)}$ starting from  $(y^{(0)},1).$
\end{proof}
Hence by implicit function theorem for every $\lambda$ sufficiently close to $1$, the homotopy function has a unique solution $(y,1) $ of \ref{homf}, which is smooth in the parameter  $\lambda$, in a neighbourhood of $(y^{(0)},1) $.\\

Now we show that the smooth curve $\Gamma_y^{(0)}$ for the homotopy function \ref{homf} is bounded and converges and establish conditions for global convergence of the homotopy method with the homotopy function \ref{homf}. We show that if the $x$ and $z_2$-components of the point $(x,z_1,z_2,\lambda)$ are bounded, then the homotopy curve $\Gamma_y^{(0)}$ is bounded.

\begin{theorem}\label{bnd}
	Let $\mathcal{F}$ be a non-empty set and $A \in R^{n\times n}.$  Assume that there exists a sequence of points $\{u^k\} \subset \Gamma_y^{(0)} \subset \mathcal{F}_1 \times (0,1],$ where $u^k=(x^k,z_1^k,z_2^k, \lam^k)$ such that $\|x^k\|< \infty \ \text{as} \ k \to \infty$ and $\|z_2^k\|< \infty \ \text{as} \ k \to \infty$ and for a given $y^{(0)} \in \mathcal{F}_1,$ $0$ is a regular value of $H(y,y^{(0)},\lambda).$ Then $\Gamma_y^{(0)}$ is a bounded curve in $\mathcal{F}_1 \times (0,1].$  
\end{theorem}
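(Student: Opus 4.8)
The plan is to argue by contradiction, exploiting the fact that the only variables along the curve not controlled a priori are $\lambda$ and $z_1$. Since $\Gamma_y^{(0)} \subset \mathcal{F}_1 \times (0,1]$, the parameter satisfies $\lambda \in (0,1]$ and is therefore automatically bounded, while the hypothesis supplies bounds on $\|x^k\|$ and $\|z_2^k\|$ along the curve. Consequently, proving that $\Gamma_y^{(0)}$ is bounded reduces to showing that $\|z_1^k\|$ cannot escape to infinity. So I would suppose, for contradiction, that some sequence $u^k = (x^k, z_1^k, z_2^k, \lambda^k) \in \Gamma_y^{(0)}$ has $\|z_1^k\| \to \infty$.

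First I would extract a convergent subsequence. Because $x^k$, $z_2^k$ and $\lambda^k$ are bounded, I may pass to a subsequence (not relabeled) with $x^k \to x^*$, $z_2^k \to z_2^*$ and $\lambda^k \to \lambda^* \in [0,1]$, all finite. Since $\|z_1^k\| \to \infty$, there is a coordinate $i$ with $(z_1^k)_i \to \infty$. Membership in $\mathcal{F}_1 \times (0,1]$ gives $z_1^k > 0$ and $x^k > 0$, and the fixed starting point obeys $x^{(0)} > 0$, $z_1^{(0)} > 0$; these sign facts are what drive the argument. The second block of \ref{homf} reads coordinatewise $(z_1^k)_i x_i^k = \lambda^k (z_1^{(0)})_i x_i^{(0)}$, whose right-hand side is bounded by $(z_1^{(0)})_i x_i^{(0)}$, so $x_i^k = \lambda^k (z_1^{(0)})_i x_i^{(0)} / (z_1^k)_i \to 0$ and hence $x_i^* = 0$.

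Next I would read off the $i$-th coordinate of the first block of \ref{homf}, written as $(1-\lambda^k)(z_1^k)_i = (1-\lambda^k)\big[((A+A^t)x^k)_i + q_i - (A^t z_2^k)_i\big] + \lambda^k(x_i^k - x_i^{(0)})$. The bracketed term is bounded because $x^k$ and $z_2^k$ are, and the left-hand side is nonnegative because $1-\lambda^k \geq 0$ and $(z_1^k)_i > 0$. If $\lambda^* < 1$, then $1-\lambda^k \to 1-\lambda^* > 0$ forces the left-hand side to $+\infty$ while the right-hand side stays bounded, a contradiction. If $\lambda^* = 1$, then the right-hand side tends to $x_i^* - x_i^{(0)} = -x_i^{(0)} < 0$, whereas the left-hand side is nonnegative for every $k$; a nonnegative sequence cannot converge to a negative limit, again a contradiction. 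Either way $\|z_1^k\|$ stays bounded, so together with the bounds on $x^k$, $z_2^k$ and $\lambda^k$ the curve $\Gamma_y^{(0)}$ is bounded.

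The main obstacle is precisely the degenerate regime $\lambda \to 1$: away from it one can simply solve the first block for $z_1$ and bound it using the bounds on $x$ and $z_2$, but as $\lambda \to 1$ the coefficient $1-\lambda$ collapses and this direct route fails. The device that rescues the argument is the coupling between the blocks, using the second block to force the offending coordinate $x_i^k \to 0$ and then using the strict positivity $x_i^{(0)} > 0$ inherited from $y^{(0)} \in \mathcal{F}_1$ to make the limit of the first block strictly negative, in conflict with the sign of $(1-\lambda^k)(z_1^k)_i$. I would note that the regular-value hypothesis is used only to guarantee, via Theorem \ref{reg} and Lemmas \ref{par}--\ref{cl}, that $\Gamma_y^{(0)}$ is a genuine smooth curve, while the boundedness itself rests entirely on the sign and magnitude estimates above.
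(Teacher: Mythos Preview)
Your proposal is correct and follows essentially the same route as the paper: assume unboundedness, pass to a subsequence so that only $\|z_1^k\|$ can blow up, split into the cases $\lambda^*<1$ and $\lambda^*=1$, and in the latter use the second block to force $x_i^k\to 0$ before reading off a sign contradiction from the first block against $x_i^{(0)}>0$. Your handling of the $\lambda^*=1$ case via the inequality $(1-\lambda^k)(z_1^k)_i\geq 0$ versus a right-hand side tending to $-x_i^{(0)}<0$ is in fact a slightly cleaner packaging of the same idea the paper uses.
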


\begin{proof}
Note that $0$ is a regular value of $H(y,y^{(0)},\lambda)$ by Theorem \ref{reg}. Now we assume that $\Gamma_y^{(0)} \subset \mathcal{F}_1 \times (0,1]$ is an unbounded curve. Then there exists a sequence of points $\{u^k\},$ where $u^k=(y^k, \lam^k) \subset \Gamma_y^{(0)}$ such that $\|(y^k, \lam^k)\| \to \infty.$ As $(0,1]$ is a bounded set and $x$ component and $z_2$ component of $\Gamma_y^{(0)}$ is bounded, there exists a subsequence of points  $\{u^k\}=\{(y^k, \lam^k)\}=\{x^k,z_1^k,z_2^k, \lam^k\}$ such that $x^k \to \bar{x},\ {z_2}^k \to \bar{z_2}, \ \lam^k \to \bar{\lam} \in [0,1] \ \text{and} \ \|z^k\| \to \infty \ \text{as} \ k \to \infty, \ \text{where} \ z^k=\left[\begin{array}{c} z_1^k\\ z_2^k\\\end{array}\right].$ Since $\Gamma_y^{(0)} \subset H_{y^{(0)}}^{-1}(0),$ we have
\begin{equation}\label{zzq}
(1-\lambda^k)[(A+A^t)x^k+q-z_1^k-A^tz_2^k]+\lambda^k(x^k-x^{(0)})=0 
\end{equation}	
\begin{equation}\label{yyq}
Z_1^kx^k-\lambda^k Z_1^{(0)}x^{(0)}=0
\end{equation}
\begin{equation}\label{wwwq}
Z_2^k(Ax^k+q)-\lambda^k Z_2^{(0)}(Ax^{(0)}+q)=0
\end{equation}
where $Z_1^k=\text{diag}(z_1^k)$ and $Z_2^k=\text{diag}(z_2^k).$ Let
$\bar{\lam} \in [0,1], \|z_1^k\|=\infty$ and $\|z_2^k\|<\infty$ as $k \to \infty.$
Then $\exists \ i \in \{1,2,\cdots, n\}$ such that $z_{1i}^k \to \infty$ as $k \to \infty.$ Let $I_{1z}=\{i\in\{1,2,\cdots n\} : \lim\limits_{k\to \infty}z_{1i}^k = \infty\}.$ When $\bar{\lam} \in [0,1),$ for $i\in I_{1z}$ we can get from Equation \ref{zzq}, $(1-\lam^k)[((A+A^t)x^k)_i+q_i-z_{1i}^k-(A^tz_{2}^k)_i] + \lam^k(x_i^k-x_i^{(0)})=0$
$\implies (1-\lam^k)z_{1i}^k=(1-\lam^k)[((A+A^t)x^k)_i+q_i-(A^tz_{2}^k)_i]+\lam^k(x_i^k-x_i^{(0)}) \implies z_{1i}^k=[((A+A^t)x^k)_i+q_i-(A^tz_{2}^k)_i]+\frac{\lam^k}{(1-\lam^k)}(x_i^k-x_i^{(0)}).$ As $k \to \infty$ right hand side is bounded, but left hand side is unbounded. It contradicts that $\|z_1^k\|=\infty.$ When $\bar{\lam}=1,$ then from Equation \ref{yyq}, we get, $x_i^k=\frac{\lam^k z_{1i}^{(0)}x_i^{(0)}}{z_{1i}^k}$ for $i \in I_{1z}.$ As $k \to \infty, x_i^k \to 0.$ Again from Equation \ref{zzq}, we obtain	$x_i^{(0)}=\frac{(1-\lambda^k)}{\lambda^k}[((A+A^t)x^k)_i+q_i-z_{1i}^k-(A^tz_2^k)_i]+x_i^k$ for $i \in I_{1z}.$ As $k \to \infty,$ we have  $x_i^{(0)}=-\lim\limits_{k\to \infty}\frac{(1-\lambda^k)}{\lambda^k}z_{1i}^k \leq 0.$ It contradicts that $\|z_1^k\|=\infty.$\\ So  $\Gamma_y^{(0)}$ is a bounded curve in $\mathcal{F}_1 \times (0,1].$  
\end{proof}
Now we show the condition to get bounded curve for nonsingular matrix $A$.
\begin{corol}\label{222}
Let $\mathcal{F}$ be a non-empty set and $A \in R^{n\times n}$  be a nonsingular matrix. Assume that there exists a sequence of points $\{u^k\} \subset \Gamma_y^{(0)} \subset \mathcal{F}_1 \times (0,1],$ where $u^k=(x^k,z_1^k,z_2^k, \lam^k)$ such that $\|x^k\|< \infty \ \text{as} \ k \to \infty.$   Further suppose for $\lam^k \to 1,$ $\|z_2^k\|< \infty \ \text{as} \ k \to \infty.$ Suppose that for a given $y^{(0)} \in \mathcal{F}_1,$ $0$ is a regular value of $H(y,y^{(0)},\lambda).$ Then $\Gamma_y^{(0)}$ is a bounded curve in $\mathcal{F}_1 \times (0,1].$ 
\end{corol}

\begin{proof}
By theorem \ref{reg}, $0$ is a regular value of $H(y,y^{(0)},\lambda)$. Now we assume that $\Gamma_y^{(0)} \subset \mathcal{F}_1 \times (0,1]$ is an unbounded curve. Then there exists a sequence of points $\{u^k\},$ where $u^k=(y^k, \lam^k) \subset \Gamma_y^{(0)}$ such that $\|(y^k, \lam^k)\| \to \infty.$  $(0,1]$ is a bounded set and $x$ component of $\Gamma_y^{(0)}$ is bounded. There exists a subsequence of points  $\{u^k\}=\{(y^k, \lam^k)\}=\{x^k,z_1^k,z_2^k, \lam^k\}$ such that $x^k \to \bar{x},\ $ and suppose for $\lam^k \to 1,$ $\|z_2^k\|< \infty \ \text{as} \ k \to \infty.$ Then two cases will arise.
\vsp
\NI \textbf{Case 1:} $\bar{\lam} \in [0,1], \|z_1^k\|<\infty,$ $\|z_2^k\|=\infty.$\\
Let $\|z_2^k\|=\infty.$ Then $\exists \ j \in \{1,2,\cdots n\}$ such that $z_{2j}^k \to \infty$ as $k \to \infty.$ Let $I_{2z}=\{j\in\{1,2,\cdots n\} : \lim\limits_{k\to \infty}z_{2j}^k = \infty\}.$ When $\bar{\lam} \in [0,1),$ for $j\in I_{2z}$ we can get from Equation \ref{zzq},
$z_{2j}^k=(A^{-t}(A+A^t)x^k)_j+(A^{-t}q)_j-(A^{-t}z_{1}^k)_j+\frac{\lam^k}{1-\lam^k}(x_j^k-x_j^{(0)}).$ As $k \to \infty,$ right hand side is bounded, but left hand side is not. This also contradicts that $\|z_2^k\|=\infty.$ So with our assumption for $\lam^k \to 1,$ $\|z_2^k\|< \infty \ \text{as} \ k \to \infty,$ $\bar{\lam} \in [0,1],$ the homotopy curve is bounded. 
\vsp
\NI \textbf{Case 2:} $\bar{\lam} \in [0,1], \|z_1^k\|=\infty,$ $\|z_2^k\|=\infty.$\\
Let $\|z_1^k\|=\infty, \|z_2^k\|=\infty. $ Then either $\exists \ i \in \{1,2,\cdots n\}$ such that $z_{1i}^k \to \infty$, $z_{2i}^k \to \infty$ as $k \to \infty$ or $\exists \ i,j \in \{1,2,\cdots n\}, i\neq j$ such that $z_{1i}^k \to \infty$ and $z_{2j}^k \to \infty$ as $k \to \infty.$ When $z_{1i}^k \to \infty$, $z_{2i}^k \to \infty$ as $k \to \infty$ and $\bar{\lam} \in [0,1),$ we have,
$z_{1i}^k+(A^tz_{2}^k)_i=((A+A^t)x^k)_i+q_i+\frac{\lam^k}{(1-\lam^k)}(x_i^k-x_i^{(0)}).$ Now as $k \to \infty,$ right hand side is bounded, but left hand side is not, which is impossible. When $\bar{\lam}=1,$ then our assumption $\|z_2^k\|< \infty \ \text{as} \ k \to \infty$ and the argument of the previous theorem \ref{bnd} contradicts that $z_{1i}^k \to \infty,$ $z_{2i}^k \to \infty$ as $k \to \infty.$ As $k \to \infty,$  when $z_{1i}^k \to \infty,$ $z_{2j}^k \to \infty$ for $i \neq j$ as $k \to \infty$ then considering the $i$th and $j$th component and using same argument similar to the previous theorem \ref{bnd} and case 1,  we will get a contradiction.

Thus $\Gamma_y^{(0)}$ is a bounded curve in $\mathcal{F}_1 \times (0,1].$  
\end{proof}

Now we show the necessary condition of the homotopy curve $\Gamma_y^{(0)}$ to be bounded.
\begin{theorem}\label{001}
	Suppose the solution set $\Gamma_y^{(0)}$ of the homotopy function $H(y,y^{(0)},\lambda)=0$ is unbounded. Then there exists $(\xi, \eta, \zeta) \in R_+^{3n}$ such that $e^t \xi =1,$  $\xi^tA\xi \leq 0.$
\end{theorem}
\begin{proof}
	Assume that the solution set $\Gamma_y^{(0)}$ is unbounded. Then there exists a sequence of points $\{u^k\} \subset \Gamma_y^{(0)} \subset \mathcal{F}_1 \times (0,1],$ where $u^k=(x^k,z_1^k,z_2^k, \lam^k)$ such that  $\lim_{k\to \infty}\lam^k=\bar{\lam}$ and either $\|z_2^k\|<\infty$ as $k \to \infty$ with two cases (i) $\lim_{k\to \infty}e^tx^k=\infty$  and	(ii) $\lim_{k\to \infty}(1-\lam^k)e^tx^k= \infty$ or $\lim_{k\to \infty}e^tz_2^k=\infty$ with two cases (i) $\lim_{k\to \infty}e^tx^k=\infty$  and	(ii) $\lim_{k\to \infty}(1-\lam^k)e^tx^k= \infty.$ \\
	First we consider that $\|z_2^k\|<\infty$ as $k \to \infty.$\\
	Case (i) Let $\lim_{k\to \infty}\frac{x^k}{e^tx^k}=\xi \geq 0 $ and $\lim_{k\to  \infty}\frac{z_1^k}{e^tx^k}=\eta \geq 0. $ So it is clear that $e^t\xi=1.$ Then dividing by $e^tx^k$ and taking limit $k \to \infty $ from equations \ref{zzq},\ref{yyq},\ref{wwwq} we get \begin{eqnarray}
	(1-\bar{\lam})[(A+A^t)\xi - \eta]+\bar{\lam}\xi=0\label{n1}\\
	\xi_i \eta_i=0 \ \forall \ i \label{n2}
		\end{eqnarray}
		From equations \ref{n1} and \ref{n2} we get $\eta=(A+A^t)\xi+\frac{\bar{\lam}}{(1-\bar{\lam})}\xi \implies 0=\xi^t\eta=\xi^t[(A+A^t)\xi+\frac{\bar{\lam}}{(1-\bar{\lam})}\xi]$ for $\bar{\lam} \in [0,1).$ This implies that $\xi^t(A+A^t)\xi=-\frac{\bar{\lam}}{(1-\bar{\lam})}\xi \leq 0$  i.e. $\xi^tA\xi\leq 0.$ Specifically for $\bar{\lam}=0,$ $\xi^tA\xi= 0$ and for $\bar{\lam}\in (0,1),$  $\xi^tA\xi<0.$ For $\bar{\lam}=1$ $\xi=0,$ contradicts that $e^t\xi=1.$ \\
		Case (ii) Let $\lim_{k\to \infty}\frac{(1-\lam^k)x^k}{(1-\lam^k)e^tx^k}=\xi'\geq 0.$ Then $e^t\xi'=1.$  Let  $\lim_{k\to \infty}\frac{z_1^k}{(1-\lam^k)e^tx^k}=\eta' \geq 0.$ Then multiplying the equation \ref{zzq} with $(1-\lam^k)$ and dividing by $(1-\lam^k)e^tx^k$, multiplying the equation \ref{yyq} with $(1-\lam^k)$ and dividing by $((1-\lam^k)e^tx^k)^2$ and  multiplying the equation \ref{wwwq} with $(1-\lam^k)$ dividing by $((1-\lam^k)e^tx^k)^2$ and taking limit $k \to \infty$, we get \\
		\begin{eqnarray}
			(1-\bar{\lam})[(A+A^t)\xi' - (1-\bar{\lam})\eta']+\bar{\lam}\xi'=0\label{n11}\\
		\xi'_i \eta'_i=0 \ \forall \ i \label{n22}
		\end{eqnarray}
	Multiplying $(\xi')^t $ in both sides of equation \ref{n11}, we get $(\xi')^tA\xi'\leq 0$ for $\bar{\lam} \in [0,1).$ Specifically for $\bar{\lam}=0,$ $(\xi')^tA\xi'= 0$ and for $\bar{\lam}\in (0,1),$  $(\xi')^tA\xi'<0.$ For $\bar{\lam}=1,$ $\xi'=0,$ contradicts that $e^t\xi'=1.$ \\
		Later we consider that $\lim_{k\to \infty}e^tz_2^k=\infty.$\\
		Case (i) Let $\lim_{k\to \infty}\frac{x^k}{e^tx^k}=\xi \geq 0, $  $\lim_{k\to  \infty}\frac{z_1^k}{e^tx^k}=\eta \geq 0 $ and $\lim_{k\to  \infty}\frac{z_2^k}{e^tx^k}=\zeta \geq 0.$ It is clear that $e^t\xi=1.$ Then dividing by $e^tx^k$ and taking limit $k \to \infty $ from equation \ref{zzq}, dividing by $(e^tx^k)^2$ and taking limit $k \to \infty $ from equation \ref{yyq}, \ref{wwwq}, we get 
		\begin{eqnarray}
		(1-\bar{\lam})[(A+A^t)\xi-\eta-A^t\zeta]+\bar{\lam}\xi=0 \label{nn1}\\
		\xi_i\eta_i=0 \ \forall \ i \label{nn2}\\
		\zeta_i(A\xi)_i=0 \ \forall \ i \label{nn3}
		\end{eqnarray} 
		 From equation \ref{nn1} we get $\eta+A^t\zeta=(A+A^t)\xi+\frac{\bar{\lam}}{1-\bar{\lam}}\xi$ for $\bar{\lam} \in [0,1).$ Now multiplying $\xi^t$ in both sides we get $\xi^t(A+A^t)\xi+\frac{\bar{\lam}}{1-\bar{\lam}}\xi^t\xi=0.$ Hence $\xi^t(A+A^t)\xi=-\frac{\bar{\lam}}{1-\bar{\lam}}\xi^t\xi \leq 0$ for $\bar{\lam} \in [0,1).$ Specifically for $\bar{\lam}=0,$ $\xi^tA\xi= 0$ and for $\bar{\lam}\in (0,1),$  $\xi^tA\xi<0.$ For $\bar{\lam}=1,$ $\xi=0,$ contradicts that $e^t\xi=1.$\\
		 Case(ii) Let $\lim_{k\to \infty}\frac{(1-\lam^k)x^k}{(1-\lam^k)e^tx^k}=\xi'\geq 0.$ Then $e^t\xi'=1.$  Let  $\lim_{k\to \infty}\frac{z_1^k}{(1-\lam^k)e^tx^k}=\eta' \geq 0$ and $\lim_{k\to \infty}\frac{z_2^k}{(1-\lam^k)e^tx^k}=\zeta' \geq 0$  Then multiplying the equation \ref{zzq} with $(1-\lam^k)$ and dividing by $(1-\lam^k)e^tx^k$, multiplying the equation \ref{yyq} with $(1-\lam^k)$ and dividing by $((1-\lam^k)e^tx^k)^2$ and  multiplying the equation \ref{wwwq} with $(1-\lam^k)$ dividing by $((1-\lam^k)e^tx^k)^2$ and taking limit $k \to \infty,$ we get 
		 \begin{eqnarray}
		 (1-\bar{\lam})(A+A^t)\xi'- (1-\bar{\lam})^2\eta'- (1-\bar{\lam})^2A^t\zeta'+\bar{\lam}\xi'=0 \label{nnn1}\\
		 \xi'_i\eta'_i=0 \ \forall \ i \label{nnn2}\\
		 \zeta'_i(A\xi')_i=0 \ \forall \ i \label{nnn3}
		 \end{eqnarray}
		 Multiplying $(\xi')^t$ in both side of equation \ref{nnn1} we get $(\xi')^t(A+A^t)\xi'- (1-\bar{\lam})(\xi')^t\eta'- (1-\bar{\lam})(\xi')^tA^t\zeta'=-\frac{\bar{\lam}}{(1-\bar{\lam})}(\xi')^t\xi'\leq 0$ for $\bar{\lam} \in [0,1).$ Specifically for $\bar{\lam}=0,$ $(\xi')^tA\xi'= 0$ and for $\bar{\lam}\in (0,1),$  $(\xi')^tA\xi'<0.$ For $\bar{\lam}=1, \xi'=0,$ contradicts that $e^t\xi'=1.$
		\end{proof}
	\begin{remk}\label{00}
		Therefore in the neighbouhood of $\bar{\lam}=1$ the homotopy curve is bounded and for the parameter $\lam=0,$ $(\xi)^tA\xi= 0$ and for $\lam\in (0,1),$  $(\xi)^tA\xi<0,$ where $\xi \geq 0,$ $e^t\xi=1.$
	\end{remk}
\begin{corol}
	Suppose $A \in R^{n\times n}$ is a nonsingular matrix and assume that there exists a sequence of points $\{u^k\} \subset \Gamma_y^{(0)} \subset \mathcal{F}_1 \times (0,1],$ where $u^k=(x^k,z_1^k,z_2^k, \lam^k)$ and $\|x^k\|< \infty \ \text{as} \ k \to \infty.$   For a given $y^{(0)} \in \mathcal{F}_1,$ $0$ is a regular value of $H(y,y^{(0)},\lambda).$ Then $\Gamma_y^{(0)}$ is a bounded curve in $\mathcal{F}_1 \times (0,1].$ 
\end{corol}

\begin{theorem}\label{01001}
	Let $A\in R^{n\times n}$ and the set  $\mathcal{F}_1$ be nonempty. For a given $y^{(0)} \in \mathcal{F}_1,$ $0$ is a regular value of $H(y,y^{(0)},\lambda).$  Then the homotopy path $\Gamma_y^{(0)} \subset \mathcal{F}_1 \times (0,1]$ is bounded.
\end{theorem}
\begin{proof}
Suppose $A\in R^{n\times n}$ is a matrix and  there exists a sequence of points $\{u^k\} \subset \Gamma_y^{(0)} \subset \mathcal{F}_1 \times (0,1],$ where $u^k=(x^k,z_1^k,z_2^k, \lam^k).$ Hence by the definition of $\mathcal{F}_1$  $x^k,z_1^k,z_2^k,\\Ax^k+q>0.$ From remark \ref{00} the homotopy curve is bounded in the neighbourhood of $\lam=1.$ Assume that the homotopy curve $\Gamma_y^{(0)} \subset \mathcal{F}_1 \times (0,1)$ is unbounded. Then from theorem \ref{001} ,  $(\xi)^tA\xi<0$ for $\lam\in (0,1).$ But $Ax^k+q>0$ implies that $A\xi \geq 0,$ where $\xi=$$\lim_{k\to \infty}\frac{x^k}{e^tx^k} \geq 0,$ when $\lim_{k\to \infty}{e^tx^k}=\infty$ or $\xi=$$\lim_{k\to \infty}\frac{(1-\lam^k)x^k}{(1-\lam^k)e^tx^k}\geq 0,$ when $\lim_{k\to \infty}{(1-\lam^k)e^tx^k}=\infty.$ Hence $\xi, A\xi \geq 0$ imply that $\xi^tA\xi\geq 0$ for $\lambda\in(0,1)$, which contradicts that the homotopy path is unbounded for $\lam\in(0,1).$ Therefore the homotopy curve $\Gamma_y^{(0)} \subset \mathcal{F}_1 \times (0,1]$ is bounded.
\end{proof}
Hence it is proved that the homotopy curve $\Gamma_y^{(0)}$ is bounded for any matrix $A$.
\begin{theorem}
	For $y^{(0)}=(x^{(0)},z_1^{(0)},z_2^{(0)})\in \mathcal{F}_1,$ the homotopy equation finds a bounded smooth curve $\Gamma_y^{(0)} \subset \mathcal{F}_1 \times (0,1]$ which starts from $(y^{(0)},1)$ and approaches the hyperplane at $\lam =0.$ As $\lam \to 0,$ the limit set $L \times \{0\} \subset \bar{\mathcal{F}}_1 \times \{0\}$ of $\Gamma_y^{(0)}$ is nonempty and every point in $L$ is a solution of the following system:
	\begin{equation}\label{sys}
	\begin{split}
	(A+A^t)x+q-z_1-A^tz_2=0 \\
	Z_1x=0  \\
	Z_2(Ax+q)=0. \\
	\end{split}
    \end{equation}
\end{theorem}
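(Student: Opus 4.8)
The plan is to assemble the regularity of Theorem~\ref{reg}, the boundedness of Theorem~\ref{bnd}, and the topological classification of Lemma~\ref{cl}, and then to show that the free end of $\Gamma_y^{(0)}$ can only escape through the hyperplane $\lam=0$.

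First I would note that, by Theorem~\ref{reg}, $0$ is a regular value of $H_{y^{(0)}}$, so by Lemma~\ref{inv} the set $H_{y^{(0)}}^{-1}(0)$ is a smooth one-dimensional manifold (the domain has dimension $3n+1$ and the range $3n$). Let $\Gamma_y^{(0)}$ be its component through $(y^{(0)},1)$; by Lemma~\ref{cl} it is diffeomorphic to a circle or to an interval. To see it is an interval with $(y^{(0)},1)$ as an endpoint, I would evaluate the Jacobian $\frac{\partial H}{\partial y}$ at $(y^{(0)},1)$, obtaining the block lower-triangular matrix $\left[\begin{array}{ccc} I & 0 & 0 \\ Z_1^{(0)} & X^{(0)} & 0 \\ Z_2^{(0)}A & 0 & Y^{(0)} \end{array}\right]$ with determinant $\prod_{i=1}^n x_i^{(0)}(Ax^{(0)}+q)_i \neq 0$ since $y^{(0)} \in \mathcal{F}_1$. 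Nonsingularity here means the curve is locally a graph over $\lam$ and leaves $(y^{(0)},1)$ transversally into $\lam<1$, so $\Gamma_y^{(0)}$ is not a circle and $(y^{(0)},1)$ is a genuine endpoint.

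The crux is to locate the other end. Its limit point $(\bar y,\bar\lam)$ must lie on the boundary of $\mathcal{F}_1\times(0,1]$, and I would exclude every face but $\{\bar\lam=0\}$. (i) It cannot be a point of the open region $\mathcal{F}_1\times(0,1)$, since there the manifold would extend past it. (ii) It cannot lie on $\{\lam=1\}$: setting $\lam=1$ in \ref{homf} forces $x=x^{(0)}$ from the first block and then, using $x^{(0)}>0$ and $Ax^{(0)}+q>0$, $z_1=z_1^{(0)}$ and $z_2=z_2^{(0)}$ from the complementarity blocks, so $(y^{(0)},1)$ is the unique zero on that face and the curve cannot return to it. (iii) It cannot lie on $\partial\mathcal{F}_1$ with $\bar\lam>0$: the last two blocks give $x_iz_{1i}=\lam z_{1i}^{(0)}x_i^{(0)}$ and $(Ax+q)_iz_{2i}=\lam z_{2i}^{(0)}(Ax^{(0)}+q)_i$, so if some factor tends to $0$ while $\lam\to\bar\lam>0$ its conjugate factor must blow up, contradicting the boundedness of $\Gamma_y^{(0)}$ from Theorem~\ref{bnd}. (iv) It cannot run to infinity, again by Theorem~\ref{bnd}. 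Hence $\bar\lam=0$.

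Finally, since $\Gamma_y^{(0)}$ is bounded and its free end approaches $\{\lam=0\}$, any sequence $(y^k,\lam^k)\in\Gamma_y^{(0)}$ with $\lam^k\to 0$ has a subsequence with $y^k\to\bar y$, so the limit set $L$ is nonempty, and $L\times\{0\}\subset\bar{\mathcal{F}}_1\times\{0\}$ by passing to closures. Continuity of $H$ then gives $H(\bar y,y^{(0)},0)=0$, which is exactly the system \ref{sys}. I expect step (iii)---showing that the complementarity structure of \ref{homf} confines the path to the interior for $\lam>0$, so that no component degenerates before $\lam=0$---to be the main obstacle; it is precisely where the boundedness conclusion of Theorem~\ref{bnd} is indispensable.
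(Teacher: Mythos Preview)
Your proposal is correct and follows essentially the same approach as the paper: invoke Theorem~\ref{reg} and Lemma~\ref{cl} to classify $\Gamma_y^{(0)}$, use nonsingularity of the Jacobian at $(y^{(0)},1)$ to rule out the circle, cite Theorem~\ref{bnd} for boundedness, and then perform a case analysis on the location of the limit point---uniqueness of the solution at $\lam=1$ excludes that face, and the complementarity blocks force a blowup (contradicting boundedness) on $\partial\mathcal{F}_1$ with $\lam>0$, leaving only $\lam=0$. Your case split is organized slightly differently (and your boundary analysis in (iii) is in fact a bit more complete, since you cover all four ways a point can reach $\partial\mathcal{F}_1$, whereas the paper explicitly treats only $x_i\to 0$ or $(Ax+q)_i\to 0$), and you compute $\partial H/\partial y$ rather than $\partial H/\partial y^{(0)}$ at $(y^{(0)},1)$, but these are cosmetic variations on the same argument.
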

\begin{proof}
  	Note that $\Gamma_y^{(0)}$ is diffeomorphic to a unit circle or a unit interval $(0,1]$ in view of Lemma \ref{cl}. As $\frac{\partial{H(y,y^{(0)},1)}}{\partial{y^{(0)}}}$ is nonsingular, $\Gamma_y^{(0)}$ is diffeomorphic to a unit interval $(0,1].$ Again $\Gamma_y^{(0)}$ is a bounded smooth curve by the Theorem \ref{01001}. Let $(\bar{y},\bar{\lam})$ be a limit point of $\Gamma_y^{(0)}.$ We consider four cases:
	\begin{description}
		\item[Case 1:] $(\bar{y},\bar{\lam})\in \mathcal{F}_1 \times \{1\}.$
		\item[Case 2:] $(\bar{y},\bar{\lam})\in \partial{\mathcal{F}_1} \times \{1\}.$
		\item[Case 3:] $(\bar{y},\bar{\lam})\in \partial{\mathcal{F}_1} \times (0,1).$
		\item[Case 4:] $(\bar{y},\bar{\lam})\in \bar{\mathcal{F}}_1 \times \{0\}.$
	\end{description}
	
As the equation $H_{y^{(0)}}(y,1)=0$ has only one solution $y^{(0)}\in  \mathcal{F}_1, $ the case $1$ is impossible. In case $2$ and $3,$ there exists a subsequence of $(y^k, \lam^k) \in \Gamma_y^{(0)}$ such that $x_i^k \to 0$ or $(Ax^k+q)_i \to 0$ for $i \subseteq \{1,2,\cdots n\}.$ From the last two equalities of the homotopy function \ref{homf}, we have $z_1^k \to \infty$ or $z_2^k \to \infty.$ Hence it contradicts the boundedness of the homotopy path by the Theorem \ref{01001}. Therefore case $4$ is the only possible option. Hence $\bar{y}=(\bar{x},\bar{z_1},\bar{z_2})$ is a solution of the system $(A+A^t)x+q-z_1-A^tz_2=0,  \  Z_1x=0,  \ Z_2(Ax+q)=0.$
\end{proof}

\begin{remk}\label{me}
	From the homotopy function \ref{homf}, we obtain 
	$\bar{z}_{1i}\bar{x}_i = 0$ and $\bar{z}_{2i}(A\bar{x}+q)_i=0 \ \forall i\in\{1,2, \cdots n\}.$ Now $\bar{z}_1$ and $\bar{z}_2$ can be decomposed as $\bar{z}_1= \bar{w}-\Delta\bar{w} \geq 0$  and $\bar{z}_2= \bar{x}-\Delta\bar{x} \geq 0,$ where $\bar{w}=A\bar{x}+q.$ It is clear that $\bar{w}_i \bar{x}_i=\Delta\bar{w}_i \bar{x}_i=\Delta\bar{x}_i\bar{w}_i \ \forall i \in \{1,2, \cdots n\} .$ 
\end{remk}

We demonstrate the condition under which the homotopy functions will give the solution of LCP$(q, A).$
\begin{theorem}
 The component $\bar{x}$ of $(\bar{x},\bar{z}_1,\bar{z}_2,0) \in L\times \{0\}$ gives the solution of LCP$(q, A)$ if and only if $\Delta\bar{x}_i \Delta\bar{w}_i=0 $ or $\bar{z}_{1i}+\bar{z}_{2i}>0 \ \forall i.$
\end{theorem}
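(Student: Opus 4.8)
The plan is to reduce the claimed equivalence to an index-by-index statement and then dispose of each index by a short case analysis built on Remark \ref{me}. First I would observe that feasibility comes for free: since $(\bar{x},\bar{z}_1,\bar{z}_2,0) \in L\times\{0\} \subset \bar{\mathcal{F}}_1\times\{0\}$, we automatically have $\bar{x}\geq 0$ and $\bar{w}:=A\bar{x}+q\geq 0$, as well as $\bar{z}_1,\bar{z}_2\geq 0$. Consequently $\bar{x}$ solves LCP$(q,A)$ if and only if the complementarity condition $\bar{x}_i\bar{w}_i=0$ holds for every $i$, so the theorem amounts to the equivalence
\[
[\,\bar{x}_i\bar{w}_i=0 \ \fal i\,] \iff [\,(\Delta\bar{x}_i\Delta\bar{w}_i=0 \ \text{or}\ \bar{z}_{1i}+\bar{z}_{2i}>0)\ \fal i\,].
\]
Since both sides are conjunctions over $i$, it suffices to establish the corresponding equivalence at a single fixed index $i$.

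Next I would prove the contrapositive at each index, namely $\bar{x}_i\bar{w}_i\neq 0 \iff (\Delta\bar{x}_i\Delta\bar{w}_i\neq 0 \ \text{and}\ \bar{z}_{1i}=\bar{z}_{2i}=0)$, where nonnegativity lets me rewrite $\bar{z}_{1i}+\bar{z}_{2i}=0$ as $\bar{z}_{1i}=\bar{z}_{2i}=0$. For the forward direction, $\bar{x}_i\bar{w}_i\neq 0$ means $\bar{x}_i>0$ and $\bar{w}_i>0$; feeding $\bar{x}_i>0$ into $\bar{z}_{1i}\bar{x}_i=0$ forces $\bar{z}_{1i}=0$, and $\bar{w}_i>0$ into $\bar{z}_{2i}(A\bar{x}+q)_i=0$ forces $\bar{z}_{2i}=0$. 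The decompositions $\bar{z}_1=\bar{w}-\Delta\bar{w}$ and $\bar{z}_2=\bar{x}-\Delta\bar{x}$ from Remark \ref{me} then give $\Delta\bar{w}_i=\bar{w}_i$ and $\Delta\bar{x}_i=\bar{x}_i$, whence $\Delta\bar{x}_i\Delta\bar{w}_i=\bar{x}_i\bar{w}_i\neq 0$.

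Conversely, if $\bar{z}_{1i}=\bar{z}_{2i}=0$ then the same decompositions yield $\Delta\bar{x}_i=\bar{x}_i$ and $\Delta\bar{w}_i=\bar{w}_i$, so $\Delta\bar{x}_i\Delta\bar{w}_i\neq 0$ immediately forces $\bar{x}_i\bar{w}_i\neq 0$. Taking contrapositives at each $i$ and reinstating the universal quantifier completes the argument.

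The computation itself is routine; the part requiring care is the logic. The main point to get right is the negation of the disjunction on the right-hand side, turning ``$\Delta\bar{x}_i\Delta\bar{w}_i=0$ or $\bar{z}_{1i}+\bar{z}_{2i}>0$'' into ``$\Delta\bar{x}_i\Delta\bar{w}_i\neq 0$ and $\bar{z}_{1i}=\bar{z}_{2i}=0$'' using $\bar{z}_{1i},\bar{z}_{2i}\geq 0$, and then recognizing that the relations $\bar{z}_{1i}\bar{x}_i=0$ and $\bar{z}_{2i}(A\bar{x}+q)_i=0$ force the multipliers to vanish exactly where LCP complementarity fails. I would also double-check that the degenerate indices, where $\bar{x}_i=0$ or $\bar{w}_i=0$, are correctly swept up by the contrapositive so that no case is silently lost.
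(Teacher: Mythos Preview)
Your proof is correct and in fact cleaner than the paper's. Both you and the paper reduce to an index-by-index statement, using Remark~\ref{me} as the only input, so the underlying content is the same; the difference is in the packaging. The paper argues each implication directly: for ``$\Rightarrow$'' it splits on whether $\bar{x}_i=0$, $\bar{w}_i=0$, or both, and for ``$\Leftarrow$'' it first disposes of the case $\Delta\bar{x}_i\Delta\bar{w}_i=0$ via the identity $\bar{w}_i\bar{x}_i=\Delta\bar{w}_i\bar{x}_i=\Delta\bar{x}_i\bar{w}_i$ from Remark~\ref{me}, then splits $\bar{z}_{1i}+\bar{z}_{2i}>0$ into three subcases according to which multiplier is positive. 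Your contrapositive collapses all of this: once $\bar{z}_{1i}=\bar{z}_{2i}=0$ is on the table, the decompositions $\bar{z}_1=\bar{w}-\Delta\bar{w}$, $\bar{z}_2=\bar{x}-\Delta\bar{x}$ make $\Delta\bar{x}_i\Delta\bar{w}_i$ literally equal to $\bar{x}_i\bar{w}_i$, so both directions become one line and no case split is needed. The paper's version has the minor advantage of making visible which of $\bar{x}_i$, $\bar{w}_i$ is forced to vanish in each configuration, but your argument is shorter and loses nothing logically.
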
 

\begin{proof}
Suppose $\bar{x} \geq 0$ and $\bar{w}=A\bar{x}+q \geq 0$ are the solution of LCP$(q, A).$ Then $\bar{x}_i\bar{w}_i=0$ \ $ \forall i.$ This implies that $\bar{x}_i=0$ or $\bar{w}_i=0$ \ $ \forall i.$ We consider the following three cases:
\vsp
\NI \textbf{Case 1:} For at least one $i \in  \{1,2,\cdots n\},$ let $\bar{w}_{i}>0, \bar{x}_{i}=0.$  In view of Remark \ref{me}, this implies that $\Delta\bar{x}_i=0 \implies \Delta\bar{x}_i \Delta\bar{w}_i=0.$
\vsp
\NI	\textbf{Case 2:} For at least one $i \in  \{1,2,\cdots n\},$ let $\bar{x}_{i}>0, \bar{w}_{i}=0.$ In view of \ref{me}, this implies that $\Delta\bar{w}_i=0 \implies \Delta\bar{x}_i \Delta\bar{w}_i=0.$
\vsp
\NI	\textbf{Case 3:}	For at least one $i \in  \{1,2,\cdots n\},$ let $\bar{w}_{i}=0, \bar{x}_{i}=0.$ This implies that either $\Delta\bar{w}_i\Delta\bar{x}_i=0$ or $\bar{z}_{1i}+\bar{z}_{2i}>0.$  
\vsp
For the converse part, consider $\Delta\bar{x}_i \Delta\bar{w}_i=0 $ or $\bar{z}_{1i}+\bar{z}_{2i}>0 \ \forall i \in \{1,2,\cdots n\}.$ Let $\forall i \in \{1,2,\cdots n\}, \ \Delta\bar{x}_i \Delta\bar{w}_i=0 $ implies either $\Delta\bar{x}_i=0$ or $\Delta\bar{w}_i=0.$ This implies that $\bar{w}_i \bar{x}_i=0 \  \forall i \in \{1,2,\cdots n\}.$ Therefore $\bar{w}$ and $\bar{x}$ are the solution of the LCP$(q, A).$ Consider $\bar{z}_{1i}+\bar{z}_{2i}>0 \ \forall  i \in  \{1,2,\cdots n\}.$ Then following three cases will arise.
\vsp
\NI	\textbf{Case 1:}	Let $\bar{z}_{1i}>0, \bar{z}_{2i}=0$ for at least one $i \in  \{1,2,\cdots n\}.$ This implies that $\bar{x}_i=0$ and $\bar{w}_i \geq 0.$
\vsp
\NI \textbf{Case 2:} Let $\bar{z}_{1i}=0, \bar{z}_{2i}>0$ for at least one $i \in  \{1,2,\cdots n\}.$ This implies that $\bar{x}_i \geq 0$ and $\bar{w}_i=0.$
\vsp
\NI \textbf{Case 3:}  Let $\bar{z}_{1i}>0, \bar{z}_{2i}>0$ for at least one $i \in  \{1,2,\cdots n\}.$ This implies that $\bar{x}_i=0$ and $\bar{w}_i=0.$ 
\vsp
Considering the above three cases $\bar{x}, \bar{w}$ solve the LCP$(q, A).$	
\end{proof}
\begin{theorem}
 If $A$ is a $P_0$ matrix, then the component $\bar{x}$ of $(\bar{x},\bar{z}_1,\bar{z}_2,0) \in L\times \{0\}$ gives the solution of LCP$(q, A)$. 
\end{theorem}
\begin{proof}
Let $A$ be a $P_0$ matrix. Assume that the component $\bar{x}$ of $(\bar{x},\bar{z}_1,\bar{z}_2,0) \in L\times \{0\}$ does not give the solution of LCP$(q, A)$. Hence $\Delta\bar{x}_i \Delta\bar{w}_i\neq0 $ and $\bar{z}_{1i}+\bar{z}_{2i}=0$ for atleast one $i$. Then $\Delta\bar{x}_i \neq 0, \Delta\bar{w}_i\neq0 , \bar{z}_{1i}=0, \bar{z}_{2i}=0.$  Now $\bar{z}_{1i}=\bar{w}_i-\Delta\bar{w}_i=0$ and $\Delta\bar{x}_i \Delta\bar{w}_i\neq0$ $\implies \bar{w}_i=\Delta\bar{w}_i>0$. In similar way $\bar{z}_{2i}=\bar{x}_i-\Delta\bar{x}_i=0$ and $\Delta\bar{x}_i \Delta\bar{w}_i\neq0$ $\implies \bar{x}_i=\Delta\bar{x}_i>0$. From Equation \ref{sys}, $\Delta\bar{w}_i + (A^t\Delta\bar{x})_i=0$. This implies that $ (A^t\Delta\bar{x})_i<0$ and also $(\bar{x})_i (A^t\Delta\bar{x})_i<0.$ This contradicts that $A$ is a $P_0$-matrix. Therefore the component $\bar{x}$ of $(\bar{x},\bar{z}_1,\bar{z}_2,0) \in L\times \{0\}$ gives the solution of LCP$(q, A)$.
\end{proof}
\begin{theorem}\label{22222}
Suppose the matrix $(\bar{W}+\bar{X}A^t)$ is nonsingular, where $\bar{W}=\text{diag}(\bar{w}),$ $\bar{X}=\text{diag}(\bar{x}).$ Then $\bar{x}$ solves the LCP$(q, A).$		 	
\end{theorem}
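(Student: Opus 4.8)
The plan is to show that the nonsingularity hypothesis forces the ``correction'' vectors $\Delta\bar{x}$ and $\Delta\bar{w}$ from Remark \ref{me} to vanish; once $\Delta\bar{x}=0$ and $\Delta\bar{w}=0$, the identities $\bar{w}_i\bar{x}_i=\Delta\bar{w}_i\bar{x}_i$ of Remark \ref{me} collapse to $\bar{w}_i\bar{x}_i=0$ for every $i$. Since the limit point $(\bar{x},\bar{z}_1,\bar{z}_2,0)$ lies in $\bar{\mathcal{F}}_1\times\{0\}$, we already know $\bar{x}\geq 0$ and $\bar{w}=A\bar{x}+q\geq 0$, so the vanishing of $\Delta\bar{x}$ and $\Delta\bar{w}$ will immediately yield that $\bar{x}$ solves LCP$(q,A)$. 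Equivalently, $\Delta\bar{x}=0$ makes the criterion $\Delta\bar{x}_i\Delta\bar{w}_i=0$ of the preceding theorem hold, so I may instead invoke that theorem at the end.

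First I would extract a linear relation between $\Delta\bar{w}$ and $\Delta\bar{x}$ from the first equation of system \ref{sys} evaluated at the limit point. Writing $(A+A^t)\bar{x}+q=\bar{w}+A^t\bar{x}$ and substituting the decompositions $\bar{z}_1=\bar{w}-\Delta\bar{w}$ and $\bar{z}_2=\bar{x}-\Delta\bar{x}$ into $(A+A^t)\bar{x}+q-\bar{z}_1-A^t\bar{z}_2=0$, every term not involving the corrections cancels and one is left with
\begin{equation*}
\Delta\bar{w}+A^t\Delta\bar{x}=0.
\end{equation*}

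Next I would turn the two complementarity products of Remark \ref{me} into a single matrix identity. The entrywise equalities $\bar{w}_i\bar{x}_i=\Delta\bar{w}_i\bar{x}_i=\Delta\bar{x}_i\bar{w}_i$ say precisely that $\bar{X}\Delta\bar{w}=\bar{W}\Delta\bar{x}$, where $\bar{X}=\text{diag}(\bar{x})$ and $\bar{W}=\text{diag}(\bar{w})$. Substituting $\Delta\bar{w}=-A^t\Delta\bar{x}$ from the previous step gives $-\bar{X}A^t\Delta\bar{x}=\bar{W}\Delta\bar{x}$, i.e. $(\bar{W}+\bar{X}A^t)\Delta\bar{x}=0$. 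Because $(\bar{W}+\bar{X}A^t)$ is nonsingular by hypothesis, this forces $\Delta\bar{x}=0$, and then $\Delta\bar{w}=-A^t\Delta\bar{x}=0$ as well. Feeding this back into Remark \ref{me} gives $\bar{x}_i\bar{w}_i=0$ for all $i$, so together with $\bar{x}\geq 0$ and $\bar{w}\geq 0$ the pair $(\bar{x},\bar{w})$ solves LCP$(q,A)$.

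The only genuinely non-routine step is the assembly in the third paragraph: recognizing that the linear KKT-type relation $\Delta\bar{w}+A^t\Delta\bar{x}=0$ and the complementarity identity $\bar{X}\Delta\bar{w}=\bar{W}\Delta\bar{x}$ combine to produce exactly the operator $\bar{W}+\bar{X}A^t$ acting on $\Delta\bar{x}$. Once that is seen, the nonsingularity hypothesis does all the remaining work and the rest is bookkeeping; I do not expect any analytic difficulty, since all quantities are the already-established limit values and no further limiting argument is needed.
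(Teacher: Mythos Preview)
Your proof is correct and follows essentially the same route as the paper: derive $\Delta\bar{w}+A^t\Delta\bar{x}=0$ from the first equation of system \ref{sys}, combine it with $\bar{X}\Delta\bar{w}=\bar{W}\Delta\bar{x}$ from Remark \ref{me} to obtain $(\bar{W}+\bar{X}A^t)\Delta\bar{x}=0$, and invoke nonsingularity. The paper multiplies the first relation by $\bar{X}$ and then substitutes, whereas you substitute $\Delta\bar{w}=-A^t\Delta\bar{x}$ into the second relation, but this is a cosmetic difference only.
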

\begin{proof}
Let the matrix $(\bar{W}+\bar{X}A^t)$ be nonsingular. By Equation \ref{sys}, $\Delta\bar{w} + A^t\Delta\bar{x}=0$ and $\bar{X}\Delta\bar{w}=\bar{W}\Delta\bar{x},$ where $\bar{W}=$diag$(\bar{w})=$diag$(A\bar{x}+q).$ Now $\bar{X}\Delta\bar{w} + \bar{X}A^t\Delta\bar{x}=0$ implies that $\bar{W}\Delta\bar{x} + \bar{X}A^t\Delta\bar{x}=0.$ It implies that $\Delta\bar{x}=0.$ Then  $\bar{x}$ solves the LCP$(q,A).$		 	
\end{proof} 
Now we establish a sufficient condition of homotopy method for finding the solution of LCP$(q,A).$ 
\begin{theorem}
    If the matrix $A$ is nondegenerate, then the component $\bar{x}$ of $(\bar{x},\bar{z}_1,\bar{z}_2,0) \in L\times \{0\}$ solves LCP$(q, A)$.
\end{theorem}
\begin{proof}
    Consider that the matrix $A$ associated with LCP$(q,A)$ is nondegenerate. Therefore every principal minor of $A$ is nonzero. By theorem \ref{22222}, if the matrix $(\bar{W}+\bar{X}A^t)$ is nonsingular, then $\bar{x}$ solves the LCP$(q, A)$, where $\bar{W}=\text{diag}(\bar{w}),$ $\bar{X}=\text{diag}(\bar{x}).$  Let $\tilde{\mathcal{A}}=\left[\begin{array}{cc} 
\bar{W} & \bar{X}\\
-A^t & I\\
\end{array}\right]$.  Then $\det(\tilde{\mathcal{A}})= \det(\bar{W}+\bar{X}A^t)$.  Assume that the component $\bar{x}$ of $(\bar{x},\bar{z}_1,\bar{z}_2,0) \in L\times \{0\}$ is not the solution of LCP$(q, A)$. Then there exists atleast one $i$, such that $\bar{x}_i\bar{w}_i>0$. Without loss of generality  $\bar{w}$ and $\bar{x}$ can be represented as $\bar{w}=\left[\begin{array}{c} 
\bar{w}_p \\
\bar{w}_q\\
\bar{o}_r\\
\end{array}\right]$, $\bar{x}=\left[\begin{array}{c} 
\bar{o}_p \\
\bar{x}_q\\
\bar{x}_r\\
\end{array}\right]$, where $\bar{w}_p\in {R^p}_{++}, \ \bar{w}_q, \bar{x}_q \in {R^q}_{++}, \ \bar{x}_r\in {R^r}_{++}$, \ $\bar{o}_r\in R^r, \ \bar{o}_p \in R^p$ and $\bar{o}_r=0, \ \bar{o}_p=0. $ Here $(\bar{w}_q)_i(\bar{x}_q)_i>0$ and $\bar{W}=\text{diag}(\bar{w}), \bar{X}=\text{diag}(\bar{x})$.  Now we can rewrite $\left[\begin{array}{cc} 
\bar{W} & \bar{X}\\
-A^t & I\\
\end{array}\right]=\left[\begin{array}{cccccc} 
\bar{W}_p & \bar{O}_q & \bar{O}_r & \bar{O}_p & \bar{O}_q & \bar{O}_r \\
\bar{O}_p & \bar{W}_q & \bar{O}_r & \bar{O}_p & \bar{X}_q & \bar{O}_r\\
\bar{O}_p & \bar{O}_q & \bar{O}_r & \bar{O}_p & \bar{O}_q & \bar{X}_r\\
M & B & C & I_p & \bar{O}_q & \bar{O}_r\\
D & E & F & \bar{O}_p & I_q & \bar{O}_r\\
G & H & K & \bar{O}_p & \bar{O}_q & I_r\\
\end{array}\right]$, where $-A^t=\left[\begin{array}{ccc} 
M & B & C\\
D & E & F \\
G & H & K \\
\end{array}\right]$, \ $\bar{W}=\left[\begin{array}{ccc} 
\bar{W}_p & \bar{O}_q & \bar{O}_r \\
\bar{O}_p & \bar{W}_q & \bar{O}_r \\
\bar{O}_p & \bar{O}_q & \bar{O}_r \\
\end{array}\right]$, \ $\bar{X}=\left[\begin{array}{ccc} 
 \bar{O}_p & \bar{O}_q & \bar{O}_r \\
 \bar{O}_p & \bar{X}_q & \bar{O}_r\\
 \bar{O}_p & \bar{O}_q & \bar{X}_r\\
\end{array}\right]$, \ $\bar{X}_q=\text{diag}(\bar{x}_q)$, \ $\bar{X}_r=\text{diag}(\bar{x}_r)$, \ $\bar{W}_q=\text{diag}(\bar{w}_q)$, $\bar{W}_p=\text{diag}(\bar{w}_p)$, \ $\bar{O}_p=\text{diag}(\bar{o}_p)$, \ $\bar{O}_q=\text{diag}(\bar{o}_q)$ \ $\bar{O}_r=\text{diag}(\bar{o}_r)$, \ $M,D,G, I_p\in R^{p \times p}$, \ $B,E,H, I_q \in R^{q \times q}$, \ $C,F,K, I_r \in R^{r \times r}$ and $I_p,I_q,I_r$ are identity matrices.  By elementary row operations we can get
\vsp
$\tilde{\mathcal{B}}=\left[\begin{array}{cccccc} 
I & \bar{O}_q & \bar{O}_r & \bar{O}_p & \bar{O}_q & \bar{O}_r \\
\bar{O}_p & I & \bar{O}_r & \bar{O}_p & \bar{X}_q{\bar{W}_q}^{-1} & \bar{O}_r\\
\bar{O}_p & \bar{O}_q & \bar{O}_r & \bar{O}_p & \bar{O}_q & I\\
M & B & C & I & \bar{O}_q & \bar{O}_r\\
D & E & F & \bar{O}_p & I & \bar{O}_r\\
G & H & K & \bar{O}_p & \bar{O}_q & I\\
\end{array}\right]$.
\vsp
By interchanging rows this matrix reduces to \\ $\tilde{\mathcal{C}}=$ $\left[\begin{array}{cccccc} 
I & \bar{O}_q & \bar{O}_r & \bar{O}_p & \bar{O}_q & \bar{O}_r \\
\bar{O}_p & I & \bar{O}_r & \bar{O}_p & \bar{X}_q{\bar{W}_q}^{-1} & \bar{O}_r\\
-G & -H & -K & \bar{O}_p & \bar{O}_q & \bar{O}_r\\
M & B & C & I & \bar{O}_q & \bar{O}_r\\
D & E & F & \bar{O}_p & I & \bar{O}_r\\
G & H & K & \bar{O}_p & \bar{O}_q & I\\
\end{array}\right]$.\\ Hence $\det(\tilde{\mathcal{A}})= \det(\tilde{\mathcal{C}})=(-1)^r\det(K)\neq 0$. Therefore by theorem \ref{22222}, \ $\bar{x}$ solves LCP$(q, A)$. This contradicts the assumption. Hence the component $\bar{x}$ of $(\bar{x},\bar{z}_1,\bar{z}_2,0) \in L\times \{0\}$ is the solution of LCP$(q, A)$. 
\end{proof}
Hence for the $P_0$ and nondegenerate matrix classes the homotopy function \ref{homf} gives the solution of LCP$(q,A)$.
\begin{remk}
   We trace the homotopy path $\Gamma_y^{(0)} \subset \mathcal{F}_1 \times (0,1]$ from the initial point $(y^{(0)},1)$ as $\lambda \to 0.$ To find the solution of the given LCP$(q, A)$ we consider homotopy path along with other assumptions. Let $s$ denote the arc length of $\Gamma_y^{(0)}.$ We parameterize the homotopy path $\Gamma_y^{(0)}$ with respect to $s$ in the following form\\
 	\begin{equation}\label{ss}
 	H_{y^{(0)}} (y(s),\lambda (s))=0, \ 
 	y(0)=y^{(0)}, \   \lambda(0)=1.
 	\end{equation} 
 	Differentiating \ref{ss} with respect to $s,$ we obtain the following system of ordinary differential equations with given initial values 
 	\begin{equation}
     H'_{y^{(0)}} (y(s),\lambda (s))\left[\begin{array}{c} 
      \frac{dy}{ds}\\
     \frac{d\lambda}{ds}\\
     \end{array}\right]=0, \
     \|( \frac{dy}{ds},\frac{d\lambda}{ds})\|=1, \ 
     y(0)=y^{(0)}, \   \lambda(0)=1, \ \frac{d\lambda}{ds}(0)<0, 
    \end{equation} 
    and the $y$-component of $(y(\bar{s}),\lambda (\bar{s}))$ gives the solution of LCP$(q,A)$ for $\lambda (\bar{s})=0.$ For details, see \cite{fan}. 
    
    Note that the parameter $\lambda$ is updated from the Moore-Penrose inverse of the Jacobian matrix for tracing the homotopy path. However, this approach does not ensure that the updated value of the parameter $\lambda$ is in $(0, 1].$ Value of $\lambda$ beyond $(0, 1]$ leads to a non-homotopy path. To eliminate deviation, we propose a modification by introducing a method called \textit{ensuring feasibility by changing step length.} In this method it is necessary to check whether $0 < (\tilde{\lambda}_i - \hat{\lambda}_i) < 1$ and $(\tilde{y}^{i} - \hat{y}^{i}) \in \bar{\cal{F}}_1$ holds or not. If any of the above-mentioned criteria fails, then the step length will be changed appropriately using geometric series to trace the homotopy path $\Gamma_y^{(0)}.$ This guarantees a homotopy continuation trajectory. 
    %\cite{allgower} 
  \end{remk}

\subsection{Algorithm}
\NI \textbf{Step 0:} Initialize $(y^{(0)},\lambda_0).$ Set $l_0 \in (0, 1).$ Choose $\epsilon_2 >> \epsilon_3 >> \epsilon_1 > 0$ which are very small positive quantity.
\vsp 
\NI \textbf{Step 1:} $\tau^{(0)}= \xi^{(0)}=(\frac{1}{n})\left[\begin{array}{c} 
s\\
-1\\
\end{array}\right]$ for $i=0,$ where $n=\|\left[\begin{array}{c} 
s\\
-1\\
\end{array}\right]\|$ and $s= (\frac{\partial H}{\partial y}(y^{(0)},\lambda_0))^{-1}(\frac{\partial H}{\partial \lambda}(y^{(0)},\lambda_0)).$ If $\det  (\frac{\partial H}{\partial y}(y^{(i)},\lambda_i))>0,$ $\tau^{(i)}= \xi^{(i)}$ else $\tau^{(i)}= -\xi^{(i)},$ $i \geq 1.$ Set $l=0.$
\vsp
\NI \textbf{Step 2:} (Predictor point calculation) $(\tilde{y}^{(i)},\tilde{\lambda}_i)=(y^{(i)},\lambda_i)+a\tau^{(i)},$ where $a={l_0}^l.$ Compute $(\hat{y}^{(i)},\hat{\lambda}_{i})=H'_{y^{(0)}}(\tilde{y}^{(i)},\tilde{\lambda}_i)^+ H(\tilde{y}^{(i)}, \tilde{\lambda}_i).$ If $0<(\tilde{\lambda}_i - \hat{\lambda}_{i})<1, $ go to Step 3. Otherwise if $m = \min(a,\|(\tilde{y}^{(i)},\tilde{\lambda}_i)-(\hat{y}^{(i)},\hat{\lambda}_{i})-(y^{(i)},\lambda_i)\|)>a_0,$ update $l$ by $l+1,$ and recompute $(\tilde{\lambda}_i, \hat{\lambda}_{i})$ else go to Step 4.
\vsp
\NI \textbf{Step 3:} (Corrector point calculation)      $(y^{(i+1)},\lambda_{i+1})=(\tilde{y}^{(i)},\tilde{\lambda}_i)-(\hat{y}^{(i)},\hat{\lambda}_{i}).$ Determine the norm $r=\|H(y^{(i+1)},\lambda_{i+1})\|.$ If $r \leq 1$ and $y^{(i+1)}>0$ go to Step 5, otherwise if $a > \epsilon_3,$ update $l$ by $l+1$ and go to Step 2 else go to Step 4.
\vsp
\NI \textbf{Step 4:} If $|\lambda_{i+1} - \lambda_i| < \epsilon_2,$ then if $|\lambda_{i+1}| < \epsilon_2,$ then stop with the solution $(y^{(i+1)},\lambda_{i+1}),$ else terminate (unable to find solution) else $i=i+1$ and go to Step 1.
\vsp
\NI \textbf{Step 5:} If $|\lambda_{i+1}| \leq \epsilon_1,$ then stop with solution $(y^{(i+1)},\lambda_{i+1}),$ else $i=i+1$ and go to Step 1.
\vsp   
Note that in Step 2, $H'_{y^{(0)}}(y,\lambda)^+ = H'_{y^{(0)}}(y,{\lambda})^{t}(H'_{y^{(0)}}(y,{\lambda})H'_{y^{(0)}}(y,\lambda)^{t})^{-1}$ is the Moore-Penrose inverse of $H'_{y^{(0)}}(y,\lambda).$ We prove the following result to obtain the positive direction of the proposed algorithm.
\begin{theorem} \label{direction}
If the homotopy curve $\Gamma_y^{(0)}$ is smooth, then the positive predictor direction $\tau^{(0)}$ at the initial point $y^{(0)}$ satisfies $\det \left[\begin{array}{c}
\frac{\partial H}{\partial y \partial \lam}(y^{(0)},1)\\
\tau ^{(0)^t}\\
\end{array}\right]$$ < 0.$ 
\end{theorem}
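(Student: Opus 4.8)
The plan is to first pin down the notation: I read $\frac{\partial H}{\partial y\,\partial\lambda}(y^{(0)},1)$ as the full Jacobian
\[
H'_{y^{(0)}}(y^{(0)},1)=\left[\ \frac{\partial H}{\partial y}(y^{(0)},1)\quad \frac{\partial H}{\partial \lambda}(y^{(0)},1)\ \right],
\]
a $3n\times(3n+1)$ matrix, so that appending the row $\tau^{(0)t}$ below it yields a genuine square $(3n+1)\times(3n+1)$ matrix whose determinant we must sign. From Step~1 of the algorithm the positive predictor direction is $\tau^{(0)}=\frac{1}{n}\begin{bmatrix} s\\ -1\end{bmatrix}$ with $s=\big(\frac{\partial H}{\partial y}(y^{(0)},1)\big)^{-1}\frac{\partial H}{\partial \lambda}(y^{(0)},1)$ and $n=\big\|\begin{bmatrix} s\\ -1\end{bmatrix}\big\|$; the entry $-1/n<0$ in the $\lambda$-slot is exactly the orientation forcing $\lambda$ to decrease from $1$, i.e.\ the positive direction. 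Note that $\tau^{(0)}$ spans the (one-dimensional, since $0$ is a regular value) kernel of $H'_{y^{(0)}}(y^{(0)},1)$.

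Next I would compute $B:=\frac{\partial H}{\partial y}(y^{(0)},1)$ explicitly from the homotopy function \ref{homf}. Setting $\lambda=1$ kills every $(1-\lambda)$ factor in the first block, and evaluating at $y=y^{(0)}$ gives
\[
B=\begin{bmatrix} I & 0 & 0\\ Z_1^{(0)} & X^{(0)} & 0\\ Z_2^{(0)}A & 0 & Y^{(0)}\end{bmatrix},
\]
which is block lower triangular, so $\det B=\det(X^{(0)})\det(Y^{(0)})=\prod_{i=1}^n x_i^{(0)}(Ax^{(0)}+q)_i$. Because $y^{(0)}\in\mathcal{F}_1$ forces $x^{(0)}>0$ and $Ax^{(0)}+q>0$, we get $\det B>0$; in particular $B$ is nonsingular, which is precisely what licenses the Schur-complement step below (and is the special feature of the endpoint $\lambda=1$).

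The core step is then a block-determinant computation. Writing $c:=\frac{\partial H}{\partial \lambda}(y^{(0)},1)$, the augmented matrix is $M=\begin{bmatrix} B & c\\ \frac{1}{n}s^t & -\frac{1}{n}\end{bmatrix}$, and the Schur complement formula gives
\[
\det M=\det(B)\Big(-\tfrac{1}{n}-\tfrac{1}{n}\,s^tB^{-1}c\Big).
\]
Since $s=B^{-1}c$ by definition, $s^tB^{-1}c=s^ts=\|s\|^2\ge 0$, so
\[
\det M=-\frac{\det(B)\,\big(1+\|s\|^2\big)}{n}.
\]
With $\det B>0$, $n>0$, and $1+\|s\|^2>0$, the right-hand side is strictly negative, which is the assertion.

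I expect no deep analytic obstacle here; the whole argument is one explicit Jacobian evaluation followed by a determinant identity. The two places to be careful are the notational matching---aligning the column order of the full Jacobian with the way $\tau^{(0)}$ is assembled in the algorithm, so that $M$ really is the stated stacked matrix---and the sign bookkeeping in the Schur complement, where it matters not merely that $B$ is invertible but that $\det B>0$ and that the $(3n+1,3n+1)$ entry is $-1/n<0$. The only genuine input beyond linear algebra is the nonsingularity of $B$ at $\lambda=1$, and that is handed to us for free by $y^{(0)}\in\mathcal{F}_1$.
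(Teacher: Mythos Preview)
Your proposal is correct and follows essentially the same route as the paper: compute the block lower-triangular Jacobian $B=\partial H/\partial y$ at $(y^{(0)},1)$, note $\det B=\prod_i x_i^{(0)}(Ax^{(0)}+q)_i>0$, and then apply the Schur complement to the stacked matrix to obtain a strictly negative determinant. The only cosmetic difference is that you retain the normalization factor $1/n$ from Step~1 of the algorithm, whereas the paper works with the unnormalized direction $\begin{bmatrix}\kappa\\-1\end{bmatrix}$; since $1/n>0$ this scales the last row by a positive constant and does not affect the sign.
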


\begin{proof}
	From the Equation \ref{homf}, we consider the following homotopy function \\ 
	\vsp
	$H(y,y^{(0)},\lambda)=$ $\left[\begin{array}{c} 
	(1-\lambda)[(A+A^t)x+q-z_1-A^tz_2]+\lambda(x-x^{(0)}) \\
	Z_1x-\lambda Z_1^{(0)}x^{(0)}\\
	Z_2(Ax+q)-\lambda Z_2^{(0)}(Ax^{(0)}+q)\\
	\end{array}\right]=0.$ Now, \\ 
	\vsp
	$\frac{\partial H}{\partial y \partial \lam}(y,\lam)=\left[\begin{array}{cccc} 
	(1-\lambda)(A + A^t) + \lambda I & -(1-\lambda)I & -(1-\lambda)A^t & P\\
	Z_1 & X & 0 & -Z_1^{(0)}x^{(0)}\\
	Z_2A & 0 & W & -Z_2^{(0)}(Ax^{(0)}+q)\\ \
	\end{array}\right],$ where $P = (x-x^{(0)})-[(A+A^t)x+q-z_1-A^tz_2]$ and $W=\text{diag}(Ax+q).$ At the initial point $(y^{(0)},1)$\\
	 $\frac{\partial H}{\partial y \partial \lam}(y^{(0)},1)=\left[\begin{array}{cccc} 
	 I & 0 & 0 & -[(A+A^t)x^{(0)}+q-z^{(0)}_1-A^tz^{(0)}_2]\\
	Z^{(0)}_1 & X^{(0)} & 0 & -Z^{(0)}_1x^{(0)}\\
	Z^{(0)}_2A & 0 & W^{(0)} & -Z^{(0)}_2(Ax^{(0)}+q)\\ 
	\end{array}\right].$\\
	\vsp
	\NI Let positive predictor direction be $\tau^{(0)}=\left[\begin{array}{c}
	\kappa \\ -1
	\end{array}\right] = \left[\begin{array}{c}
	(R^{(0)}_1)^{(-1)}R_2^{(0)} \\ -1
	\end{array} \right],$ where 
	\vsp
	$R^{(0)}_1=\left[\begin{array}{ccc} 
	I & 0 & 0 \\
	Z^{(0)}_1 & X^{(0)} & 0 \\
	Z^{(0)}_2A & 0 & W^{(0)} \\ 
	\end{array}\right],$ $R^{(0)}_2=\left[\begin{array}{c} 
	 -[(A+A^t)x^{(0)}+q-z^{(0)}_1-A^tz^{(0)}_2]\\
	-Z^{(0)}_1x^{(0)} \\
	-Z^{(0)}_2(Ax^{(0)}+q) \\ 
	\end{array}\right]$ and $\kappa$ is a $n \times 1$ column vector.\\
	 Hence, 
    $\det\left[\begin{array}{c}
	\frac{\partial H}{\partial y \partial \lam}(y^{(0)},1)\\
	\tau ^{(0)^t}\\
	\end{array}\right]$\\	
	$=\det\left[\begin{array}{cc}
	R^{(0)}_1 & R^{(0)}_2\\
	(R^{(0)}_2)^t(R^{(0)}_1)^{(-t)} & -1\\	
	\end{array}\right]$ \\ $= \det\left[\begin{array}{cc}
	R^{(0)}_1 & R^{(0)}_2\\
	0 & -1-(R^{(0)}_2)^t(R^{(0)}_1)^{(-t)}(R^{(0)}_1)^{(-1)}R_2^{(0)} \\	\end{array}\right] \\$ 
	\vsp
	\NI $=\det(R^{(0)}_1) \det(-1-(R^{(0)}_2)^t(R^{(0)}_1)^{(-t)}(R^{(0)}_1)^{(-1)}R_2^{(0)})$ \\ 
	\vsp
	\NI $=-\det(R^{(0)}_1) \det(1+(R^{(0)}_2)^t(R^{(0)}_1)^{(-t)}(R^{(0)}_1)^{(-1)}R_2^{(0)})$ \\ 
	\vsp
	\NI $=-\prod_{i=1}^{n}x^{(0)}_i y^{(0)}_i \det(1+(R^{(0)}_2)^t(R^{(0)}_1)^{(-t)}(R^{(0)}_1)^{(-1)}R_2^{(0)}) <0. $ \\
	So the positive predictor direction $\tau ^{(0)}$ at the initial point $y^{(0)}$ satisfies\\ $\det\left[\begin{array}{c}
	\frac{\partial H}{\partial y \partial \lam}(y^{(0)},1)\\
	\tau ^{(0)^t}\\
	\end{array}\right]<0.$
\end{proof}
\vsp
\begin{remk}
	We conclude from the Theorem \ref{direction} that the positive tangent direction $\tau$ of the homotopy path $\Gamma_y^{(0)}$ at any point $(y,\lambda)$ be negative and it depends on det$(R_1),$ where $R_1=\left[\begin{array}{ccc} 
	(1-\lambda)(A+A^t)+\lambda I & -(1-\lambda)I & -(1-\lambda)A^t \\
	Z_1 & X & 0 \\
	Z_2A & 0 & W\\ 
	\end{array}\right].$
\end{remk}

\section{Numerical Examples}
In this section we consider some examples of LCP$(q, A)$ based on $P_0$ and nondegenerate matrices to demonstrate the effectiveness of our proposed algorithm.  Note that Example \ref{matrix2} - \ref{mtx5} are not processable by the algorithms given in Yu et al. \cite{yu2006combined}, Xu et al. \cite{xuuu}, Zhao et al. \cite{N}. Even these examples are not processable by Lemke's algorithm \cite{das2017finiteness} except example  \ref{matrix1} and \ref{matrix2}. Example \ref{matx1} - \ref{mtx5} are also not processable by modulus based algorithm \cite{schafer2004modulus}. We show that the proposed algorithm can process these examples to find the solution.
\begin{examp} \label{matrix0}
		Consider $A=\left[\begin{array}{cc}
		-1 & 2\\
		3  & -1\\
		\end{array}\right]$ and $q=\left[\begin{array}{c}
		1\\
		-0.5\\
		\end{array}\right].$ Note that $A$ is an $N$-matrix. It is solvable by the homotopy method with the homotopy function \ref{zhaon}, proposed by Zhao et al. \cite{N}. Now we show that the homotopy function \ref{homf} also solves the linear complementarity problem with $N$-matrix.  Now choose the initial point $x^{(0)}=\left[\begin{array}{c}
		0.4\\
		0.1\\
		\end{array}\right], \ {z_1}^{(0)}=\left[\begin{array}{c}
		1\\
		1\\
		\end{array}\right]$ and ${z_2}^{(0)}=\left[\begin{array}{c}
		1\\
		1\\
		\end{array}\right].$ Using the proposed algorithm we get the optimal solution of the homotopy function \ref{homf} after 20 iterations and the solution is given by $(\bar{y}, \bar{\lam})=(1,0,0,2.5,1,0,0).$ Therefore $\bar{x}=\left[\begin{array}{c}
		1\\
		0\\
		\end{array}\right]$ solves LCP$(q, A).$ The homotopy path shown in Figure \ref{fig0} illustrates the convergence with respect to the solution vector $x$ and $\lambda$.
	\end{examp}
%\begin{figure}[h!]
%	\centering
% 	\includegraphics[height=3in, width=6in]{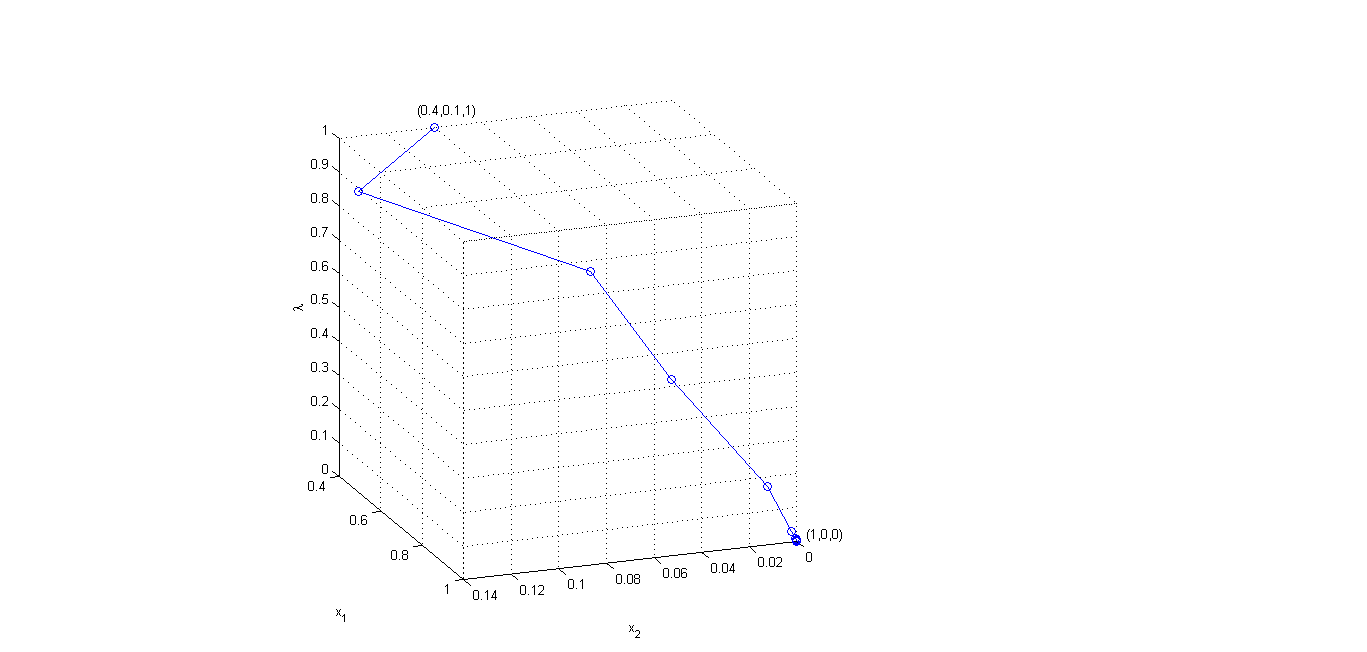}
%	\caption{}
%	\label{fig0}
%		\end{center}
%\end{figure}
%\newpage
\begin{examp}\label{matrix1}
	Let $A=\left[\begin{array}{cc}
	1 & -1 \\
	-1  & 1\\
	\end{array}\right]$ and $q=\left[\begin{array}{c}
	-0.5\\
	2\\
	\end{array}\right].$ It is easy to show that $A$ is a $PSD$-matrix. It is solvable by the homotopy method with the homotopy function \ref{psdyu}, proposed by Yu et al.\cite{yu2006combined} . Now we show that the homotopy function \ref{homf} also solves the linear complementarity problem with $PSD$-matrix. Now choose the initial point $x^{(0)}=\left[\begin{array}{c}
	2\\
	1\\
	\end{array}\right],$ ${z_1}^{(0)}=\left[\begin{array}{c}
	1\\
	1\\
	\end{array}\right]$ and ${z_2}^{(0)}=\left[\begin{array}{c}
	1\\
	1\\
	\end{array}\right].$ Using the proposed algorithm we obtain   $(\bar{y},\bar{\lam})=(0.5,0,0,1.499,0.499,0,0,0)$ after 22 iterations. Note that $\bar{x}=\left[\begin{array}{c}
	0.5\\
	0\\
		\end{array}\right]$ is the solution of LCP$(q, A).$ The homotopy path shown in Figure \ref{fig1} illustrates the convergence with respect to the solution vector $x$ and $\lambda$.
\end{examp}
%\begin{figure}[h!]
%	\centering
%	\includegraphics[height=3in, width=7in]{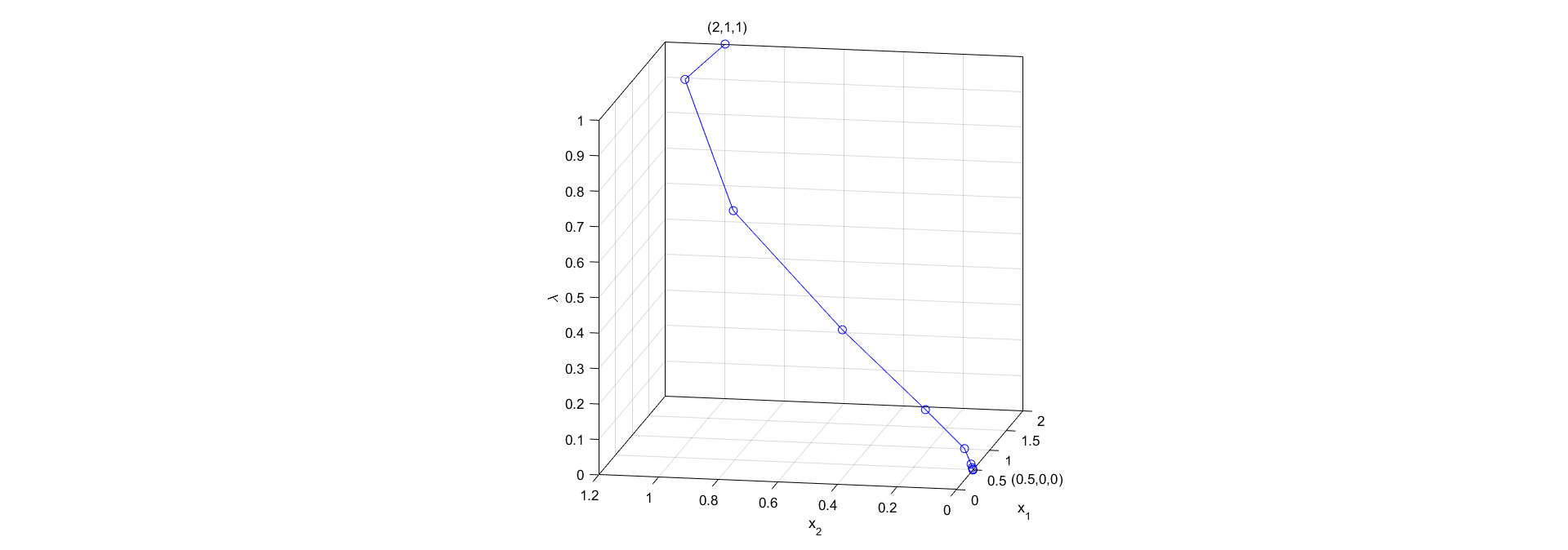}
%	\caption{}
%	\label{fig2}
%	%	\end{center}
%\end{figure}
%\newpage
Now we show that the homotopy function \ref{homf} can solve LCP$(q,A)$ with singular matrix $A$ satisfying some conditions.
\begin{examp} \label{matrix2}
	Consider $A=\left[\begin{array}{cc}
	1 & 1\\
	0  & 0\\
	\end{array}\right]$ and $q=\left[\begin{array}{c}
	-1\\
	1\\
	\end{array}\right].$ Note that $A$ is a singular  $Q_0$-matrix. Now choose the initial point $x^{(0)}=\left[\begin{array}{c}
	1\\
	0.2\\
	\end{array}\right], \ {z_1}^{(0)}=\left[\begin{array}{c}
	1\\
	1\\
	\end{array}\right]$ and ${z_2}^{(0)}=\left[\begin{array}{c}
	1\\
	1\\
	\end{array}\right].$ Using the proposed algorithm we get the optimal solution of the homotopy function \ref{homf} after 15 iterations and the solution is given by $(\bar{y}, \bar{\lam})=(1,0,0,1,1,0,0).$ Therefore $\bar{x}=\left[\begin{array}{c}
	1\\
	0\\
	\end{array}\right]$ solves LCP$(q, A).$  The homotopy path shown in Figure \ref{fig2} illustrates the convergence with respect to the solution vector $x$ and $\lambda$.
\end{examp}
%\begin{figure}[h!]
%	\centering
%	\includegraphics[height=5in, width=7in]{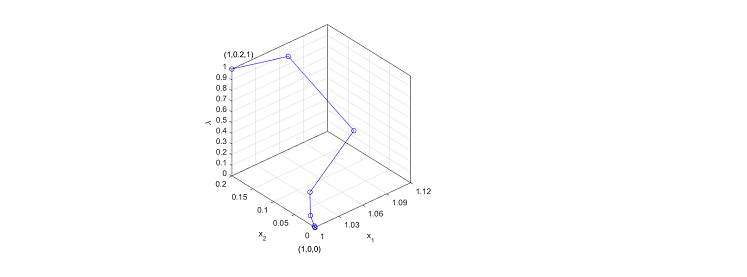}
%	\caption{}
%	\label{fig1}
%	%	\end{center}
%\end{figure}
%\newpage

%\begin{figure}[htp]
%	\centering
%	\includegraphics[height=3in, width=7in]{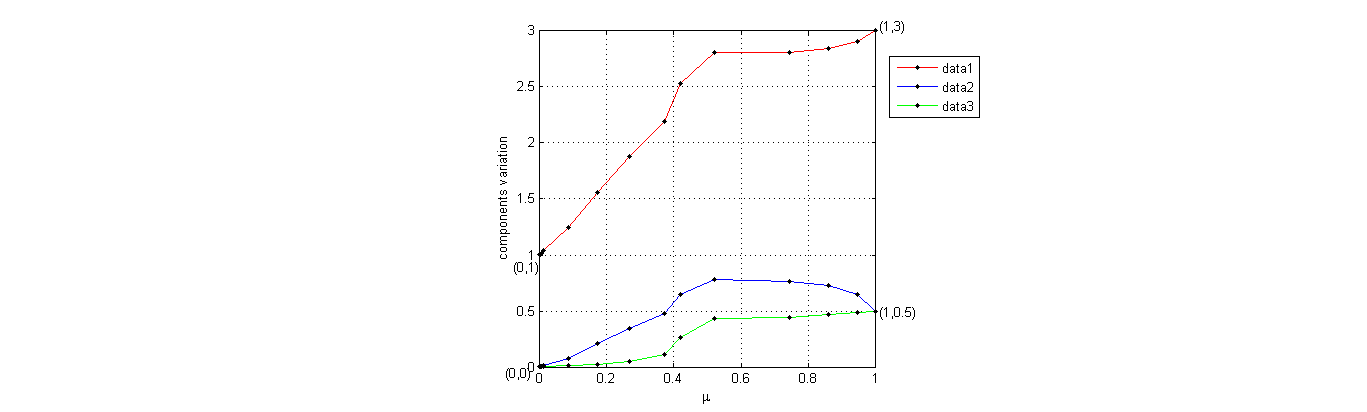}
%	\caption{}
%	\label{fig3}
%	%	\end{center}
%\end{figure}

\begin{examp} \label{matx1}
	Let $A=\left[\begin{array}{ccc}
	0 & 1 & 1\\
	2  & 0 & 1\\
	-4 & -5 & 0\\
	\end{array}\right]$ and $q=\left[\begin{array}{c}
	-4\\
	-7\\
	10\\
	\end{array}\right].$ It is easy to show that $A$ is an ${E_0}^s$-matrix. This is not processable by modulus based method. Now choose the initial point $x^{(0)}=\left[\begin{array}{c}
	1\\
	1\\
	6\\
	\end{array}\right],$ ${z_1}^{(0)}=\left[\begin{array}{c}
	1\\
	1\\
	1\\
	\end{array}\right]$ and ${z_2}^{(0)}=\left[\begin{array}{c}
	1\\
	1\\
	1\\
	\end{array}\right].$ Using the proposed algorithm we obtain   $(\bar{y},\bar{\lam})=(0,2,7,5,0,0,0,2,7,0)$ after 14 iterations. Note that $\bar{x}=\left[\begin{array}{c}
	0\\
	2\\
	7\\
	\end{array}\right]$ is the solution of LCP$(q, A).$ The convergence of the homotopy function is shown in the Figure \ref{nwfg1}. The first, second and third component of $x$ is represented by data1, data2 and data3 respectively.\\
\end{examp}
\begin{examp} \label{matrix3}
Let $A=\left[\begin{array}{ccc}
-1 & 2 & 1\\
1  & -0.50 & -0.25\\
-0.50 & -1 & -1\\
\end{array}\right]$ and $q=\left[\begin{array}{c}
-0.25\\
-0.10\\
3\\
\end{array}\right].$ It is easy to show that $A$ is not an $N$-matrix. This matrix is not processable by using existing homotopy functions as well as lemke's algorithm.  Now choose the initial point $x^{(0)}=\left[\begin{array}{c}
2.3\\
1\\
0.7\\
\end{array}\right],$ ${z_1}^{(0)}=\left[\begin{array}{c}
1\\
1\\
1\\
\end{array}\right]$ and ${z_2}^{(0)}=\left[\begin{array}{c}
1\\
1\\
1\\
\end{array}\right].$ Using the proposed algorithm we obtain   $(\bar{y},\bar{\lam})=(1.8333,0,2.0833,0,1,2125,0,1.8333,0,2.0833,0)$ after 17 iterations. Note that $\bar{x}=\left[\begin{array}{c}
1.8333\\
0\\
2.0833\\
\end{array}\right]$ is the solution of LCP$(q, A).$ The convergence of the homotopy function is shown in the Figure \ref{fig3}. The first, second and third component of $x$ is represented by data1, data2 and data3 respectively.
\end{examp}
%\begin{figure}[h!]
%	\centering
%	\includegraphics[height=3in, width=7in]{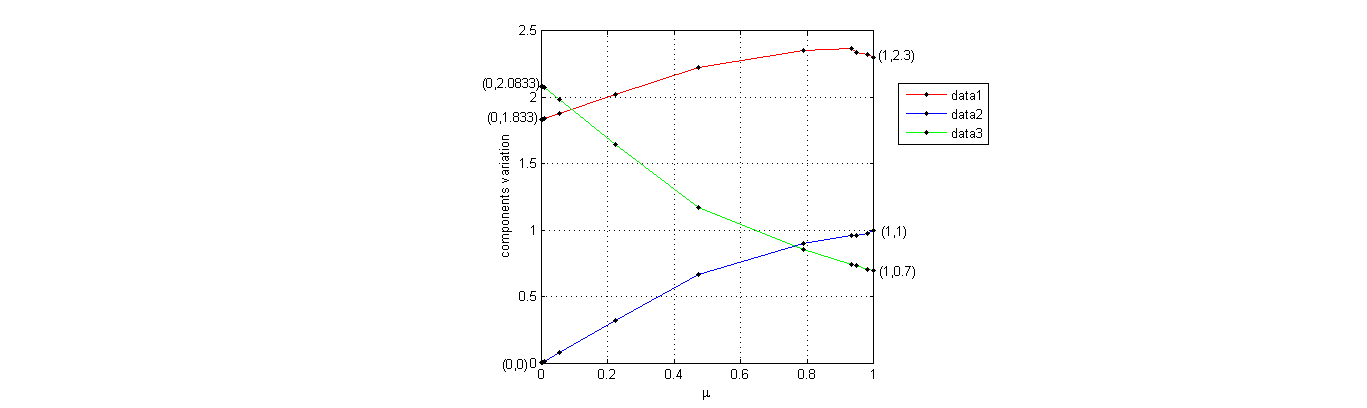}
%	\caption{}
%	\label{fig4}
%	%	\end{center}
%\end{figure}
%\newpage

\begin{examp} \label{matrix4}
	Let $A=\left[\begin{array}{ccc}
	1 & -2 & 0\\
	0  & 1 & -2\\
	-2 & 0 & 1\\
	\end{array}\right]$ and $q=\left[\begin{array}{c}
	-1\\
	1\\
	7\\
	\end{array}\right].$ It is easy to show that $A$ is an almost $C_0$ matrix. This matrix is not processable by lemke's algorithm as well as modulus based algorithm. This matrix is also not processable by existing homotopy methods. Now choose the initial point $x^{(0)}=\left[\begin{array}{c}
	3\\
	0.5\\
	0.5\\
	\end{array}\right], \ {z_1}^{(0)}=\left[\begin{array}{c}
	1\\
	1\\
	1\\
	\end{array}\right]$ and ${z_2}^{(0)}=\left[\begin{array}{c}
	1\\
	1\\
	1\\
	\end{array}\right].$ Using the proposed algorithm we obtain  is $(\bar{y}, \bar{\lam})=(1,0,0,0,1,5,1,0,0,0)$ after $24$ iterations. Note that $\bar{x}=\left[\begin{array}{c}
	1\\
	0\\
	0\\
	\end{array}\right]$ solves LCP$(q, A),$ which is a degenerate solution. The convergence of the homotopy function is shown in the Figure \ref{fig4}. The first, second and third component of $x$ is represented by data1, data2 and data3 respectively.
\end{examp}
%\begin{figure}[htp]
%    \centering
%	\includegraphics[height=3in, width=7in]{figu43.png}
%	\caption{}
%	\label{fig5}
%	%	\end{center}
%\end{figure}
%\newpage
\begin{examp} \label{matrix5}
	Let $A=\left[\begin{array}{cccc}
	-1 & 1 & 1 & 1\\
	1  & 0 & 0 & 0\\
	1 & 0 & 0 & -1\\
	1 & 0 & -1 & 0\\
	\end{array}\right]$ and $q=\left[\begin{array}{c}
	-1\\
	1\\
	-1\\
	1\\
	\end{array}\right].$ $A$ is a $Q$-matrix by \cite{neogy2005almost} and also almost $\bar{N}$-matrix. This matrix is not processable by lemke's algorithm. Now choose the initial point $x^{(0)}=\left[\begin{array}{c}
	4\\
	4\\
	1\\
	1\\
	\end{array}\right], \ {z_1}^{(0)}=\left[\begin{array}{c}
	1\\
	1\\
	1\\
	1\\
	\end{array}\right]$ and ${z_2}^{(0)}=\left[\begin{array}{c}
	1\\
	1\\
	1\\
	1\\
	\end{array}\right].$ We apply our proposed algorithm to this LCP$(q, A)$ and after 17 iterations we get the approximate optimal solution of the homotopy function \ref{homf}, which is  $(\bar{y}, \bar{\lam})=(1,0,2,0,0,2,0,0,1,0,2,0,0).$ Note that $\bar{x}=\left[\begin{array}{c}
	1\\
	0\\
	2\\
	0\\
	\end{array}\right]$ solves LCP$(q, A),$ which gives a degenerate solution. The convergence of the homotopy function is shown in the Figure \ref{fig5}. Data1, data2, data3 and data4 represent the first, second, third and fourth component of $x$ respectively.
\end{examp}
\begin{examp} \label{maat4}
	Let $A=\left[\begin{array}{cccc}
	-2 & -2 & -2 & 2\\
	-2  & -1 & -3 & 3\\
	-2 & -3 & -1 & 3\\
	2 & 3 & 3 & 0\\
	\end{array}\right]$ and $q=\left[\begin{array}{c}
	-1001\\
	-500\\
	-500\\
	-500\\
	\end{array}\right].$ $A$ is a almost $N_0$-matrix by \cite{neogy2005almost} but not $Q$-matrix. This matrix is not processable by lemke's algorithm as well as modulus based algorithm. This matrix is also not processable by existing homotopy methods.  Now choose the initial point $x^{(0)}=\left[\begin{array}{c}
	100\\
	100\\
	200\\
	1000\\
	\end{array}\right], \ {z_1}^{(0)}=\left[\begin{array}{c}
	1\\
	1\\
	1\\
	1\\
	\end{array}\right]$ and ${z_2}^{(0)}=\left[\begin{array}{c}
	1\\
	1\\
	1\\
	1\\
	\end{array}\right].$ We apply our proposed algorithm to this LCP$(q, A)$ and after 17 iterations we get the approximate optimal solution of the homotopy function \ref{homf}, which is  $(\bar{y}, \bar{\lam})=(250,0,0,750.50,0,1251.50,1251.50,0,250,0,0,750.50,0).$ Note that $\bar{x}=\left[\begin{array}{c}
	250\\
	0\\
	0\\
	750.50\\
	\end{array}\right]$ solves LCP$(q, A),$ which gives a degenerate solution. The convergence of the homotopy function is shown in the Figure \ref{fiiig4}. Data1, data2, data3 and data4 represent the first, second, third and fourth component of $x$ respectively.
\end{examp}

%\begin{figure}[h!]
%	\centering
%	\includegraphics[height=3in, width=7in]{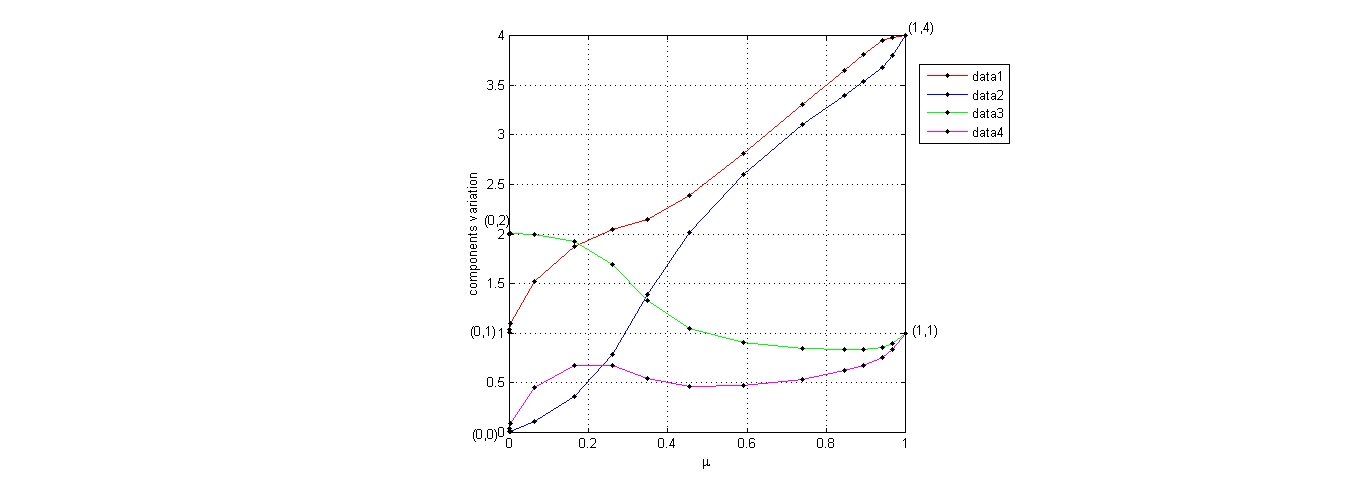}
%	\caption{}
%	\label{fig6}
%	%	\end{center}
%\end{figure}
%\newpage
\begin{examp} \label{matrix6}
	Consider $A=\left[\begin{array}{ccccc}
	0 & 0 & 0 & 1 & 2\\
	0  & 0 & -1 & -1 & 2\\
	0 & -1 & 0 & -1 & 1\\
	1 & -1 & -1 & 0 & 0\\
	2 & 1 & 0 & 0 & 0\\
	\end{array}\right]$ and $q=\left[\begin{array}{c}
	-2\\
	-1\\
	7\\
	2\\
	-1\\
	\end{array}\right].$ $A$ is an $N_0$-matrix of exact order $2.$ This matrix is not processable by lemke's algorithm as well as modulus based algorithm. This matrix is also not processable by existing homotopy methods.  Now choose the initial point $x^{(0)}=\left[\begin{array}{c}
	3\\
	1\\
	1\\
	1\\
	3\\
	\end{array}\right],$ ${z_1}^{(0)}=\left[\begin{array}{c}
	1\\
	1\\
	1\\
	1\\
	1\\
	\end{array}\right]$ and ${z_2}^{(0)}=\left[\begin{array}{c}
	1\\
	1\\
	1\\
	1\\
	1\\
	\end{array}\right].$ Using the proposed algorithm, we obtain the approximate optimal solution of the homotopy function \ref{homf}, $(\bar{y}, \bar{\lam}) = (0.5, 0, 0, 0, 1, 0, 1, 8, 2.5, 0, 0.5, 0, 0, 0, 1, 0)$ after 27 iterations. Note that $\bar{x}=\left[\begin{array}{c}
	0.5\\
	0\\
	0\\
	0\\
	1\\
	\end{array}\right]$ solves LCP$(q, A).$ The convergence of the homotopy function is shown in the Figure \ref{fig6}.  Data1, data2, data3, data4 and data5 represent the first, second, third, fourth and fifth component of $x$ respectively.
	0,-90,-80,-70,0,-90,-2,-2,-2,2,-70,-2,-1,-3,3,-50,-2,-3,-0.8,3,0,2,3,3,0
\end{examp}
%\begin{figure}[h!]
%	\centering
%	\includegraphics[height=1.5in, width=4in]{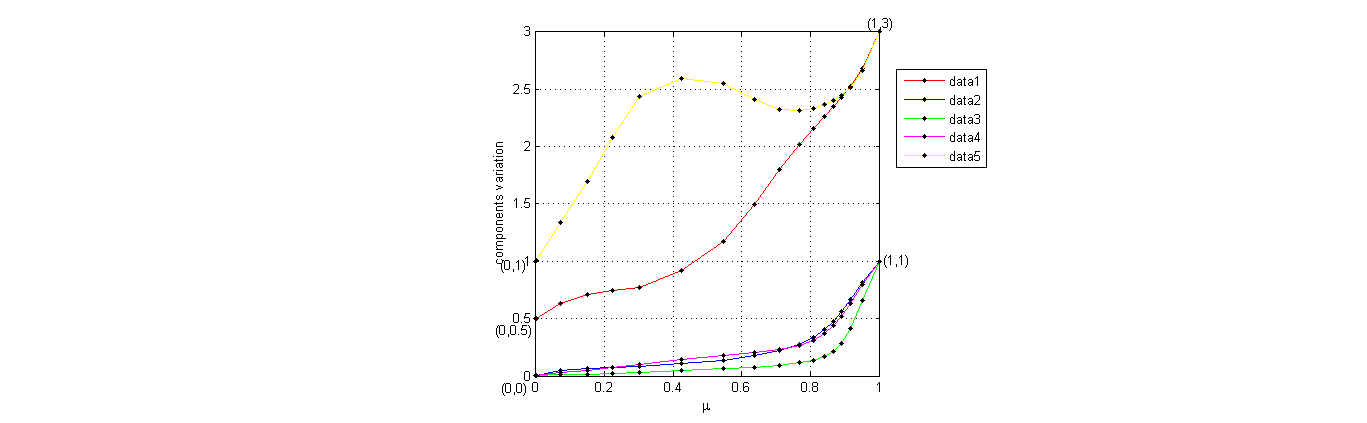}
%	\caption{}
%	\label{fig7}
%	%	\end{center}
%\end{figure}\begin{examp} \label{matrix6}
\begin{examp}\label{mtx5}
	Consider $A=\left[\begin{array}{ccccc}
0 & -90 & -80 & -70 & 0\\
-90  & -2 & -2 & -2 & 2\\
-70& -2 & -1 & -3 & 3\\
-50 & -2 & -3 & -0.8 & 3\\
0 & 2 & 3 & 3 & 0\\
\end{array}\right]$ and $q=\left[\begin{array}{c}
400\\
50\\
30\\
20\\
-10\\
\end{array}\right].$ $A$ is an $\bar{N}$-matrix of exact order $2.$ This matrix is not processable by lemke's algorithm as well as modulus based algorithm. This matrix is also not processable by existing homotopy methods.  Now choose the initial point $x^{(0)}=\left[\begin{array}{c}
0.1\\
0.1\\
0.1\\
5\\
100\\
\end{array}\right],$ ${z_1}^{(0)}=\left[\begin{array}{c}
1\\
1\\
1\\
1\\
1\\
\end{array}\right]$ and ${z_2}^{(0)}=\left[\begin{array}{c}
1\\
1\\
1\\
1\\
1\\
\end{array}\right].$ Using the proposed algorithm, we obtain the approximate optimal solution of the homotopy function \ref{homf}, $(\bar{y}, \bar{\lam}) = (0.2403846, 0, 1.634615, 3.846154, 0,$

 $0, 17.40385, 0, 0, 6.442308, 0.2403846, 0, 1.634615, 3.846154, 0, 0)$ after 1925 iterations. Note that $\bar{x}=\left[\begin{array}{c}
0.2403846\\
0\\
1.634615\\
3.846154\\
0\\
\end{array}\right]$ solves LCP$(q, A).$ The convergence of the homotopy function is shown in the Figure \ref{fg5}.  Data1, data2, data3, data4 and data5 represent the first, second, third, fourth and fifth component of $x$ respectively.
\end{examp}
\newpage
\begin{figure}[H]
	\begin{subfigure}{.5\textwidth}
		\centering
		% include first image
		\includegraphics[height=1.5in, width=4in]{mainpic22png.png}  
		\caption{Example \ref{matrix0}}
		\label{fig0}
	\end{subfigure}	
	\quad
	\begin{subfigure}{.5\textwidth}
		\centering
		% include second image
		\includegraphics[height=1.5in, width=5in]{untitled2.png}  
		\caption{Example \ref{matrix1}}
		\label{fig1}
	\end{subfigure}
	\quad
	\begin{subfigure}{.5\textwidth}
		\centering
		% include third image
		\includegraphics[height=1.5in, width=4in]{figtest.png}  
		\caption{Example \ref{matrix2}}
		\label{fig2}
	\end{subfigure}
	\quad
	\begin{subfigure}{.5\textwidth}
		\centering
		\includegraphics[height=1.5in, width=4in]{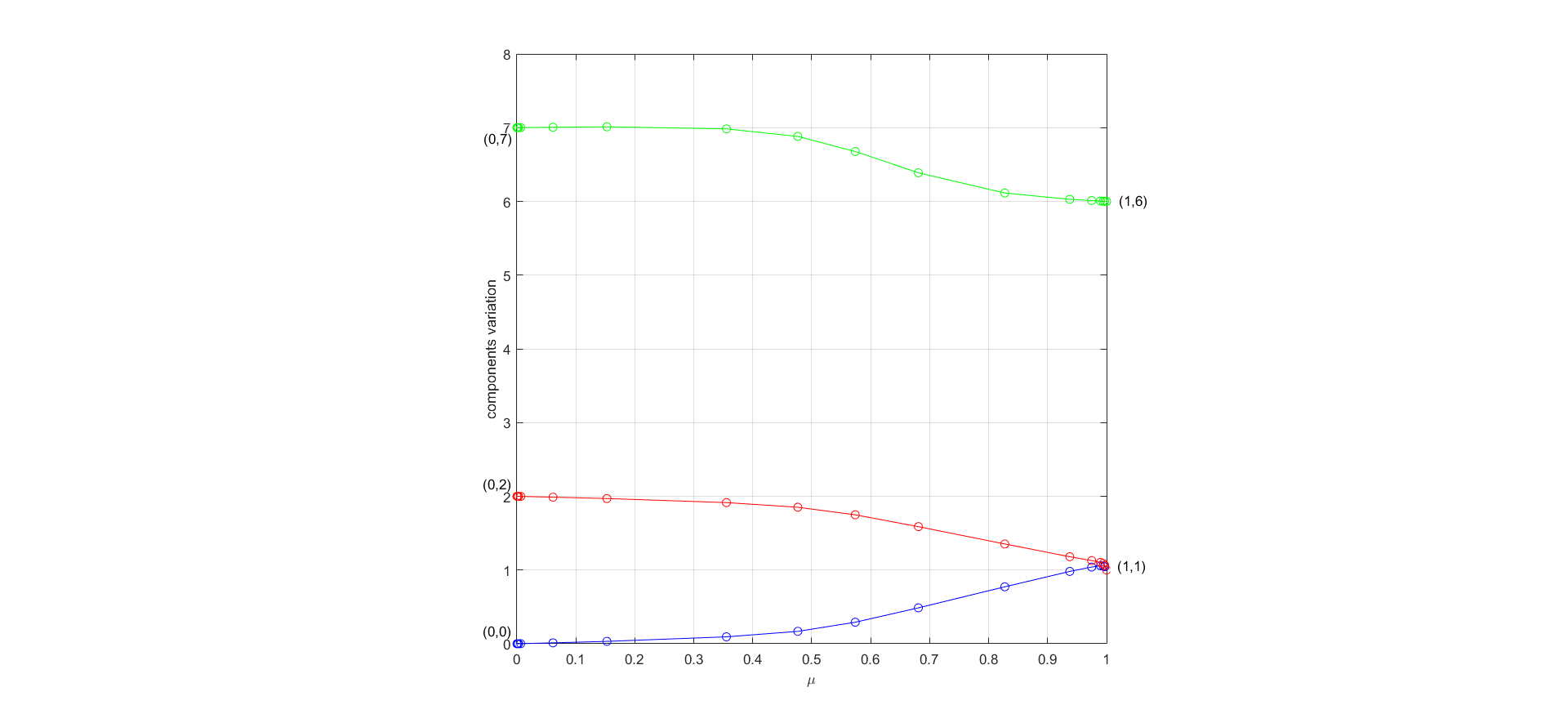}
		\caption{Example \ref{matx1}}
		\label{nwfg1}
	\end{subfigure}
\quad
	\begin{subfigure}{.5\textwidth}
		\centering
		% include fourth image
		\includegraphics[height=1.5in, width=4in]{figu42.png}  
		\caption{Example \ref{matrix3}}
		\label{fig3}
	\end{subfigure}
	\quad
	\begin{subfigure}{.5\textwidth}
		\centering
		% include fifth image
		\includegraphics[height=1.5in, width=4in]{figu43.png}  
		\caption{Example \ref{matrix4}}
		\label{fig4}
	\end{subfigure}
	\quad
	\begin{subfigure}{.5\textwidth}
		\centering
		% include seventh image
		\includegraphics[height=1.5in, width=4in]{figu44.png}  
		\caption{Example \ref{matrix5}}
		\label{fig5}
	\end{subfigure}
	\quad
	\begin{subfigure}{.5\textwidth}
		\centering
		% include seventh image
		\includegraphics[height=1.5in, width=4in]{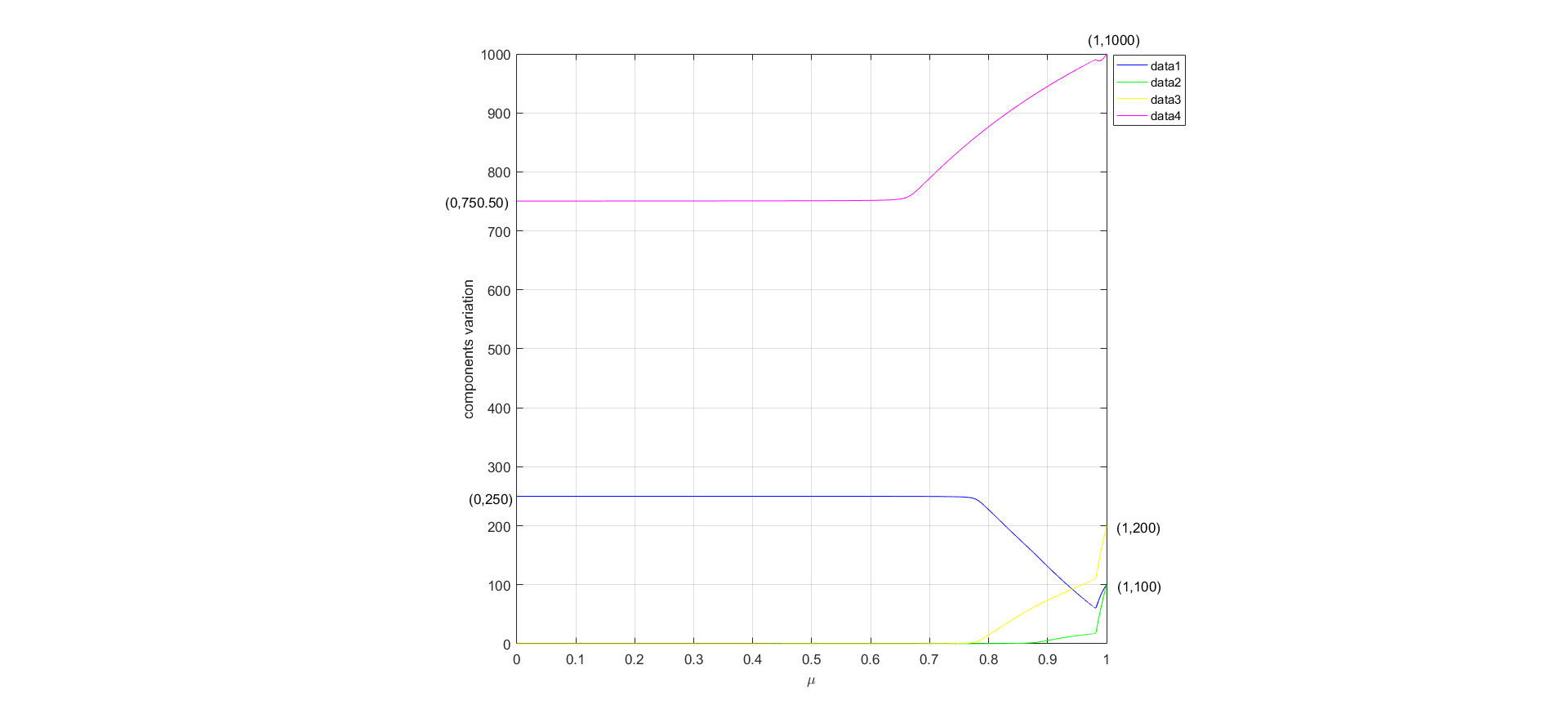}  
		\caption{Example \ref{maat4}}
		\label{fiiig4}
	\end{subfigure}
\quad
	\begin{subfigure}{.5\textwidth}
		\centering
		% include eighth image
		\includegraphics[height=1.5in, width=4in]{figu46.png}  
		\caption{Example \ref{matrix6}}
		\label{fig6}
	\end{subfigure}
\quad
\begin{subfigure}{.5\textwidth}
	\centering
	% include eighth image
	\includegraphics[height=1.5in, width=4in]{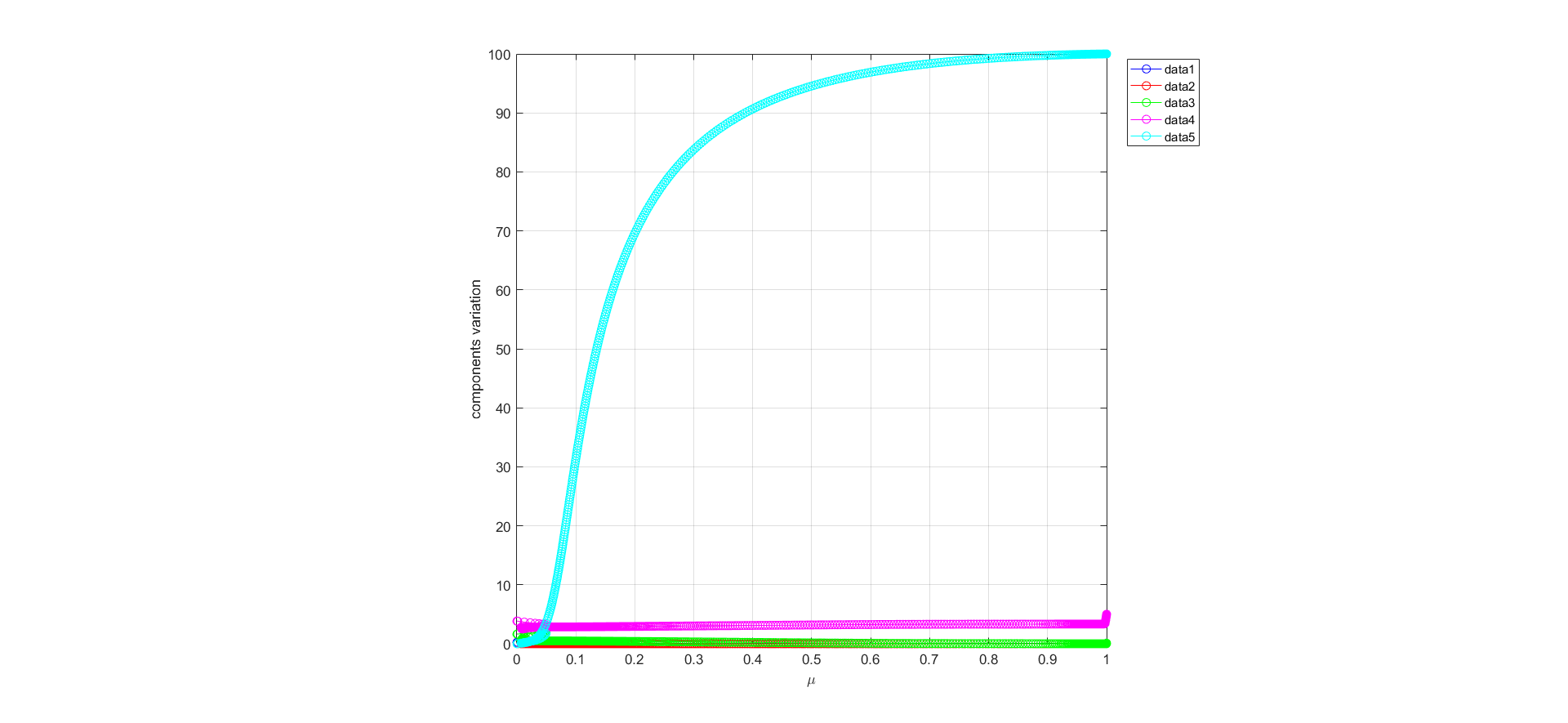}  
	\caption{Example \ref{mtx5}}
	\label{fg5}
\end{subfigure}
	\caption{Homotopy path for the LCP$(q, A)$ to show the convergence}
	\label{fig8}
\end{figure}

\section{Conclusion}
In this study, we consider an interior point homotopy path to solve linear complementarity problem. We prove a necessary and sufficient condition for the solution of LCP$(q,A)$ based on newly introduced homotopy function. To ensure a homotopy continuation trajectory we introduce a new scheme of choosing step length. Mathematically we find the positive tangent direction of the homotopy path. We show that the smooth curve for the homotopy function is bounded and convergent. Several numerical examples are presented to demonstrate the processability of larger classes of LCP$(q, A)$ based on $P_0$ and nondegenerate matrices namely,  $Q$-matrix, almost $\bar{N}$-matrix, $Q_0$-matrix, almost $N_0$-matrix, almost $C_0$-matrix, $N_0$-matrix of exact order $2$ and $\bar{N}$-matrix of exact order $2$. Many of them are not processable by lemke's algoritm, existing homotopy method and  modulus based method. However, the proposed method is able to process all the cases to find solution. 
\section{Acknowledgment}
The author A. Dutta is thankful to the Department of Science and Technology, Govt. of India, INSPIRE Fellowship Scheme for financial support. We acknowledge Mr. Abhirup Ganguly(M.Tech 2017-2019, ISI Kolkata) for his contribution.
\vsp

\bibliographystyle{plain}
\bibliography{homotopy}
\end{document}